\theoremstyle{definition}
\newtheorem{dfn}{Definition}[section]
\newtheorem{asm}[dfn]{Assumption}
\theoremstyle{plain}
\newtheorem{lem}[dfn]{Lemma}
\newtheorem{thm}[dfn]{Theorem}
\newtheorem{alg}[dfn]{Algorithm}
\newcommand{\R}{\mathbb{R}}
\newcommand{\N}{\mathbb{N}}
\newcommand{\BddMul}{w}
\newcommand{\wto}{\rightharpoonup}
\newcommand{\dual}[1]{\left\langle #1 \right\rangle}
\newcommand{\scal}[1]{\left( #1 \right)_Z}
\title{
	An Augmented Lagrangian Method for Optimization Problems in Banach Spaces%
	\thanks{This research was supported by the German Research Foundation (DFG) within the priority program ``Non-smooth and Complementarity-based Distributed Parameter Systems: Simulation and Hierarchical Optimization'' (SPP 1962) under grant numbers KA 1296/24-1 and Wa 3626/3-1.}
}
\date{October 23, 2017}
\author{
	Christian Kanzow$^{\dagger}$ \and Daniel Steck$^{\dagger}$ \and Daniel Wachsmuth%
	\thanks{University of Würzburg, Institute of Mathematics, Campus Hubland Nord,
    Emil-Fischer-Str.\ 30, 97074 Würzburg, Germany;
    \{kanzow,daniel.steck,daniel.wachsmuth\}@mathematik.uni-wuerzburg.de.}
}
\begin{document}

\maketitle

{
\small\textbf{\abstractname.}
We propose a variant of the classical augmented Lagrangian method for
constrained optimization problems in Banach spaces. Our theoretical
framework does not require any convexity or second-order assumptions and
allows the treatment of inequality constraints with infinite-dimensional
image space. Moreover, we discuss the convergence properties of our
algorithm with regard to feasibility, global optimality, and KKT conditions.
Some numerical results are given to illustrate the practical viability of
the method.
\par\addvspace{\baselineskip}
}

{
\small\textbf{Keywords.}
Constrained optimization, augmented Lagrangian method, Banach space, inequality constraints, global convergence.
\par\addvspace{\baselineskip}
}

\section{Introduction}

Let $X$, $Y$ be (real) Banach spaces and let $f:X\to\R$, $g:X\to Y$ be
given mappings. The aim of this paper is to describe an augmented Lagrangian
method for the solution of the constrained optimization problem
\begin{equation}\label{Eq:Opt}\tag{$P$}
   \min\ f(x) \quad\text{subject to (s.t.)}\quad g(x)\le 0.
\end{equation}
We assume that $Y\hookrightarrow L^2(\Omega)$ densely for some measure space
$\Omega$, where the natural order on $L^2(\Omega)$ induces the order on $Y$. A
detailed description together with some remarks about this setting is given in
Section~2.

Augmented Lagrangian methods for the solution of optimization problems
belong to the most famous and successful algorithms for the
solution of finite-dimensional problems and are
described in almost all text books on continuous optimization, see, e.g.\
\cite{Bertsekas1995,Nocedal2006}. Their
generalization to infinite-dimensional problems has received considerable 
attention throughout the last decades \cite{Bergounioux1993,Bergounioux1997,
Fortin1983,Hintermueller2006,Ito1990a,Ito1990,Ito2000,Ito2008,Iusem2002}. However,
most existing approaches either assume a very specific problem structure
\cite{Bergounioux1993,Bergounioux1997}, require strong convexity assumptions
\cite{Hintermueller2006} or consider only the case where $Y$ is finite-dimensional 
\cite{Ito1990a,Ito2008}.

The contribution of the present paper is to overcome these limitations and
to provide a general convergence theory for infinite-dimensional problems.
To this end, we extend some of the recent contributions on the convergence of
certain modified augmented Lagrangian methods from the finite- to the 
infinite-dimensional case, cf.\ \cite{Birgin2014} and references therein 
for more details regarding some of the newer convergence results in the 
finite-dimensional setting. The main difference between the classical
augmented Lagrangian approach and its modified version consists
of a more controlled way of the multiplier update which is responsible
for a stronger global convergence theory.

Clearly, the main application of our theoretical framework \eqref{Eq:Opt} is
constrained optimization in function spaces, where the inequality constraint
with $Y\hookrightarrow L^2(\Omega)$ arises naturally. In particular, our
theory covers obstacle-type problems as well as optimal control problems
(including semilinear partial differential equations) with state constraints.

Let us remark that our algorithm can also be viewed as an extension of the quadratic penalty method (also called Moreau-Yosida regularization in the infinite-dimensional literature, e.g.\ \cite{Hintermueller2006,Ulbrich2011}). A numerical comparison to this method is given in Section \ref{Sec:Applic}.

This paper is organized as follows. In Section 2, we give a detailed overview
of our problem setting and assumptions. Section 3 contains a precise statement
of the algorithm, and we conduct a convergence analysis dedicated to global
optimization in Section 4. Starting with Section 5, we assume that the mappings
which constitute our problem are continuously differentiable, and establish some
theoretical foundations regarding KKT conditions and constraint qualifications.
In Section 6, we apply these insights to our algorithm and deduce corresponding
convergence results. Finally, Section 7 contains practical applications and we
conclude with some final remarks in Section 8.

\textbf{Notation:} We use standard notation such as $\dual{\cdot,\cdot}$ for the
duality pairing on $Y$, $\scal{\cdot,\cdot}$ for the scalar product in the Hilbert space
$Z$, and $\perp$ to denote orthogonality in $Z$. Moreover, $\mathcal{L}(X,Y)$ denotes the
space of continuous linear operators from $X$ into $Y$. The norms on $X$, $Y$, etc.\ are
denoted by $\|\cdot\|$, where an index (as in $\|\cdot\|_X$) is appended if necessary.
Furthermore, we write $\to$, $\wto$, and $\wto^*$ for strong, weak, and weak-$^*$
convergence, respectively. Finally, we use the abbreviation lsc for a lower
semicontinuous function.

\section{Preliminaries and Assumptions}\label{Sec:Prelims}

We denote by $e:Y\to Z$ the (linear and continuous) dense embedding of $Y$ into
$Z:=L^2(\Omega)$, and by $K_Y$, $K_Z$ the respective nonnegative cones in
$Y$ and $Z$, i.e.\
\begin{equation*}
    K_Z:=\{z\in Z\mid z(t)\ge 0~\text{a.e.}\} \quad\text{and}\quad
    K_Y:= \{ y\in Y \mid e(y) \in K_Z\}.
\end{equation*}
Note that the adjoint mapping $e^*$ embeds $Z^*$ into $Y^*$. Hence, we have
the chain
\begin{equation}\label{Eq:Embedding}
    Y \hookrightarrow Z \cong Z^* \hookrightarrow Y^*,
\end{equation}
which is occasionally referred to as a Gelfand triple.
The main reason for the specific configuration of our spaces $Y$ and $Z$ is
that the order on $Z=L^2(\Omega)$ has some structural properties which
may not hold on $Y$. For instance, the $L^2$-norm satisfies the relation
\begin{equation}\label{Eq:NormMonotone}
    0 \le z_1 \le z_2 \quad \implies \quad \|z_1\|_Z \le \|z_2\|_Z,
\end{equation}
which does not hold for, say, the spaces $H^1$ or $H_0^1$. (Note that
\eqref{Eq:NormMonotone} is one of the defining properties of so-called
Banach or Hilbert lattices \cite{Baiocchi1984,Yosida1995}. In fact,
$Z=L^2(\Omega)$ is a Hilbert lattice, but $H^1(\Omega)$ and $H_0^1(\Omega)$
are not.) We will put the properties of $Z$ to fruitful use by performing
the augmentation which constitutes our algorithm in $Z$. To simplify this,
we denote by $z_+$ and $z_-$ the positive and negative parts of $z\in Z$,
i.e.
\begin{equation*}
    z_+ := \max\{z,0\} \quad\text{and}\quad
    z_- := \max\{-z,0\}.
\end{equation*}
These operations have a variety of useful properties. For instance, we have
$z=z_+-z_-$ and $z_+ \perp z_-$ for every $z\in Z$.

Recall that, as in the introduction, we are concerned with the optimization
problem
\begin{equation*}\tag{\ref*{Eq:Opt}}
    \min\ f(x) \quad\text{s.t.}\quad g(x)\le 0,
\end{equation*}
where $Y\hookrightarrow Z=L^2(\Omega)$. Here, the inequality $g(x)\le 0$ has to
be understood with respect to the order induced by the cone $K_Y$, which is
implicitly given by the order on $Z$ through the embedding $e$.

The following is a list of assumptions which we will use throughout this paper.

\begin{asm}[General assumptions on the problem 
setting]\label{Asm:General}\leavevmode
\begin{enumerate}[label=(A\arabic*),leftmargin=1.5cm]
   \item $f$ and $\|g_+\|_Z$ are weakly lower semicontinuous.\label{Asm:Lsc}
   \item $f$ and $g$ are continuously Fr\'echet-differentiable.\label{Asm:C1}
   \item $y\mapsto |y|$ is well-defined and continuous on $Y$.\label{Asm:Abs}
   \item The unit ball in $Y^*$ is weak-$^*$ sequentially 
      compact.\label{Asm:YWC}
\end{enumerate}
\end{asm}

\noindent
Most of the theorems we will encounter later use only a subset of these
assumptions. Hence, we will usually list the assumptions for each theorem
explicitly by referencing to the names \ref{Asm:Lsc}-\ref{Asm:YWC}.

One assumption which might require some elaboration is the weak lower
semicontinuity of $\|g_+\|_Z$. To this end, note that there are various
theorems which characterize the weak lower semicontinuity of convex functions,
e.g.\ \cite[Thm.\ 9.1]{Bauschke2011}. Hence, if $\|g_+\|$ is convex (which is true if $g$ is convex with respect to the order in $Y$), then
the (strong) lower semicontinuity of $g$ already implies the weak lower
semicontinuity. We conclude that (A1) holds, in particular, for every 
lsc.\ convex function $ f $ and any mapping $g\in\mathcal{L}(X,Y)$.

On a further note, the above remarks offer another criterion for the weak
lower semicontinuity of $\|g_+\|_Z$. Since $y\mapsto \|y_+\|$ obviously has
this property, we conclude that it is sufficient for $g$ to be weakly
(sequentially) continuous.

Regarding the space $Y$ which is embedded into $Z$, recall that 
\ref{Asm:Abs} assumed the operation $y\mapsto |y|$ to be well-defined and 
continuous on $Y$. (Note that this assumption holds automatically if
$ Y = Z $, but in many applications, $ Y $ is only a subset of $ Z $, cf.\
the first remark below.) Hence,
the same holds for the mappings $y_+$, $y_-$, $\min$, $\max$, etc., which may
be defined in terms of their counterparts on $Z$.

We now give some general remarks about the setting \eqref{Eq:Opt}.
\begin{itemize}
    \item Clearly, one motivation for this setting is the case where $\Omega$
    is a bounded domain in $\R^d$ and $Y$ is one of the spaces $H^1(\Omega)$,
    $H_0^1(\Omega)$, or $C(\bar{\Omega})$. Problems of this type will be our
    main application in Section 7. Note that \ref{Asm:Abs} is satisfied for
    these spaces, cf.\ \cite{Coffman1975,Moshe1979} for a proof in $H^1$.
    \item In theory, we could easily generalize our work by allowing $Z$ to
    be an arbitrary Hilbert lattice \cite{Baiocchi1984,Meyer1991,Schaefer1974,
    Yosida1995}. However, it turns out \cite[Cor.\ 2.7.5]{Meyer1991} that every
    Hilbert lattice is (isometrically and lattice) isomorphic to $L^2(\Omega)$
    for some measure space $\Omega$. Hence, this seemingly more general setting
    is already covered by ours.
    \item Related to the previous point, we note that our setting also covers
    the case $Y=\R^m$, which is a Hilbert lattice and can be identified with
    $L^2(\Omega)$ on the discrete measure space $\Omega=\{1,\ldots,m\}$.
\end{itemize}
We conclude this section by proving a lemma for later reference. Recall that
$\scal{\cdot,\cdot}$ denotes the scalar product in $Z=L^2(\Omega)$.

\begin{lem}\label{Lem:Infzero}
    Let $(a^k)$ and $(b^k)$ be bounded sequences in $Z$. Then
    $\min\{a^k,b^k\}\to 0$ implies $\scal{a^k,b^k}\to 0$.
\end{lem}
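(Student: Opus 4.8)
The plan is to reduce the bilinear expression $\scal{a^k,b^k}$ to quantities that are controlled by $\min\{a^k,b^k\}$ by means of an elementary pointwise identity. For any real numbers $\alpha,\beta$ one has $\alpha\beta=\min\{\alpha,\beta\}\max\{\alpha,\beta\}$, simply because the unordered pair $\{\alpha,\beta\}$ coincides with $\{\min\{\alpha,\beta\},\max\{\alpha,\beta\}\}$. Combining this with the relation $\max\{\alpha,\beta\}=\min\{\alpha,\beta\}+|\alpha-\beta|$ yields
\[
    \alpha\beta=\min\{\alpha,\beta\}^2+\min\{\alpha,\beta\}\,|\alpha-\beta|.
\]
Writing $m^k:=\min\{a^k,b^k\}$ (which belongs to $Z$ since $|m^k|\le|a^k|+|b^k|$) and applying this identity pointwise almost everywhere, then integrating over $\Omega$, gives the decomposition
\[
    \scal{a^k,b^k}=\|m^k\|_Z^2+\scal{m^k,|a^k-b^k|}.
\]

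It then remains to send both terms on the right-hand side to zero. The first term is $\|m^k\|_Z^2=\|\min\{a^k,b^k\}\|_Z^2$, which tends to $0$ by hypothesis. For the second term I would apply the Cauchy--Schwarz inequality in $Z$, obtaining
\[
    \bigl|\scal{m^k,|a^k-b^k|}\bigr|\le\|m^k\|_Z\,\bigl\||a^k-b^k|\bigr\|_Z=\|m^k\|_Z\,\|a^k-b^k\|_Z.
\]
Since $\|a^k-b^k\|_Z\le\|a^k\|_Z+\|b^k\|_Z$ and both $(a^k)$ and $(b^k)$ are bounded by assumption, the factor $\|a^k-b^k\|_Z$ stays bounded, while $\|m^k\|_Z\to0$; hence the product converges to zero. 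Combining the two estimates yields $\scal{a^k,b^k}\to0$, as claimed.

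I do not anticipate a genuine obstacle here; the only point requiring care is the choice of the right algebraic decomposition, namely recognizing that $a^kb^k$ splits into the square of the minimum (which vanishes in $Z$) plus a cross term that is dispatched by Cauchy--Schwarz together with the boundedness hypothesis. It is worth noting that the boundedness of the sequences enters \emph{only} through the factor $\|a^k-b^k\|_Z$, and that the strong $L^2$-convergence $\min\{a^k,b^k\}\to0$ is precisely what is needed to make both terms vanish.
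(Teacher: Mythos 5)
Your proof is correct and rests on the same key observation as the paper's, namely the pointwise identity $\alpha\beta=\min\{\alpha,\beta\}\max\{\alpha,\beta\}$ followed by Cauchy--Schwarz and the boundedness hypothesis. The further decomposition $\max\{\alpha,\beta\}=\min\{\alpha,\beta\}+|\alpha-\beta|$ is harmless but unnecessary: the paper applies Cauchy--Schwarz directly to $\scal{\min\{a^k,b^k\},\max\{a^k,b^k\}}$, using that $(\max\{a^k,b^k\})$ is bounded.
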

\begin{proof}
    It is easy to see that $\scal{a^k,b^k}=\scal{\min\{a^k,b^k\},\max\{a^k,b^k\}}$. Since $\min\{a^k,b^k\}\to 0$ and the sequence $(\max\{a^k,b^k\})$ is bounded, it follows that $\scal{a^k,b^k}\to 0$.
\end{proof}

\noindent
Note that the above lemma becomes false if we drop the boundedness of one
of the sequences. For instance, consider the case where $\Omega=\{1\}$ and
$Z=L^2(\Omega)$, which can be identified with $\R$. Then the sequences
$a^k=k$ and $b^k=1/k$ provide a simple counterexample.

\section{An Augmented Lagrangian Method}\label{Sec:Method}

This section gives a detailed statement of our augmented Lagrangian
method for the solution of the optimization problem \eqref{Eq:Opt}.
It is motivated by the finite-dimensional discussion in, e.g.,
\cite{Birgin2014} and differs from the traditional augmented
Lagrangian method as applied, e.g., in \cite{Fortin1983,Ito1990} to a class
of infinite-dimensional problems, in a more controlled
updating of the Lagrange multiplier estimates.

We begin by defining the augmented Lagrangian
\begin{equation}\label{Eq:AL}
   L_{\rho}:X\times Z\to\R, \quad L_{\rho}(x,\lambda):=
   f(x)+\frac{\rho}{2}
   \left\|\left( g(x)+\frac{\lambda}{\rho} \right)_+\right\|_Z^2.
\end{equation}
This enables us to formulate the following algorithm for the solution of
\eqref{Eq:Opt}, which is a variant of the (finite-dimensional) method from
\cite{Birgin2014} in the context of our optimization problem \eqref{Eq:Opt}.
In fact, formally, the method looks almost identical to the one from
\cite{Birgin2014}, but some of the notations related to the order in $Y$ or $Z$
have a different and more general meaning than those in the finite-dimensional
literature.

\begin{alg}[Augmented Lagrangian method]\label{Alg:ALM}\leavevmode
\begin{itemize}[font=\normalfont]
   \item[(S.0)] Let $(x^0,\lambda^0)\in X\times Z$, $\rho_0>0$,
      $\BddMul^{\max}\in K_Z$, $\gamma>1$, $\tau\in(0,1)$, and set $k=0$.
   \item[(S.1)] If $(x^k,\lambda^k)$ satisfies a suitable stopping criterion: STOP.
   \item[(S.2)] Choose $0\le \BddMul^k\le \BddMul^{\max}$ and compute an approximate
      solution $x^{k+1}$ of
      \begin{equation}\label{Eq:PenOpt}
         \min_{x\in X}\ L_{\rho_k}(x,\BddMul^k).
      \end{equation}
   \item[(S.3)] Set $\lambda^{k+1}:=\left(\BddMul^k+\rho_k g(x^{k+1})\right)_+$.
      If $k=0$ or
      \begin{equation}\label{Eq:RhoTest}
         \left\|\min\left\{-g(x^{k+1}),\frac{\BddMul^k}{\rho_k}\right\}\right\|_Z
         \le\tau \left\|\min\left\{-g(x^k),\frac{\BddMul^{k-1}}{\rho_{k-1}}
         \right\}\right\|_Z
      \end{equation}
      holds, set $\rho_{k+1}:=\rho_k$; otherwise, set $\rho_{k+1}:=\gamma\rho_{k}$.
   \item[(S.4)] Set $k\leftarrow k+1$ and go to \textnormal{(S.1)}.
\end{itemize}
\end{alg}

\noindent
Note that the case $k=0$ is considered separately in Step~3 for
formal reasons only since $\BddMul^{k-1}$ and $\rho_{k-1}$ are not defined
for this value of the iteration counter. In any case, the treatment
of this initial step has no influence on our convergence theory.

One of the most important aspects of the above algorithm is the sequence $(\BddMul^k)$.
Note that $\BddMul^k\le \BddMul^{\max}$ implies that $(\BddMul^k)$ is bounded in $Z$.
Apart from this boundedness, there is a certain degree of freedom in the choice of
$\BddMul^k$. For instance, we could always choose $\BddMul^k:=0$ and thus obtain
a simplified algorithm which is essentially a quadratic penalty method. Going a
little further, our method also includes the Moreau-Yosida regularization scheme
(see \cite{Hintermueller2006,Ulbrich2011} and Section \ref{Sec:Applic}) as a special
case, which arises if $(\BddMul^k)$ is chosen as a constant sequence. However, the
most natural choice, which also brings the method closer to traditional augmented
Lagrangian schemes, is $\BddMul^k:=\min\{\lambda^k,\BddMul^{\max}\}$. That is,
$\BddMul^k$ is a bounded analogue of the possibly unbounded multiplier $\lambda^k$.

Another part of Algorithm \ref{Alg:ALM} which needs some explanation is our
notion of an ``approximate solution'' in Step~2. The reason we have not
specified this part is because we will carry out two distinct convergence
analyses which each require different assumptions.

\section{Global Minimization}\label{Sec:Global}

We begin by considering Algorithm \ref{Alg:ALM} from a global optimization
perspective. Note that most of the analysis in this section can be carried
out in the more general case where $f$ is an extended real-valued function,
i.e.\ $f$ maps to $ \R\cup\{+\infty\}$.

The global optimization perspective is particularly valid for convex
problems, where we can expect to solve the subproblems in Step 2 in a
global sense. This is reflected in the following assumption, which we
require throughout this section.

\begin{asm}\label{Asm:Global}
In Step~2 of Algorithm~\ref{Alg:ALM}, we obtain $x^{k+1}$ such that there
is a sequence $\varepsilon_k\downarrow 0$ with $L_{\rho_k}(x^{k+1},\BddMul^k)
\le L_{\rho_k}(x,\BddMul^k)+\varepsilon_k$ for all $x\in X$ and $k\in\N$.
\end{asm}

\noindent
This assumption is quite natural and basically asserts that
we finish each inner iteration with a point that is (globally) optimal within
some tolerance $\varepsilon_k$, and that this tolerance vanishes asymptotically.

Apart from Assumption \ref{Asm:Global}, the main requirement for the following
theorem is the weak lower semicontinuity of $f$ and $\|g_+\|_Z$, cf.\ \ref{Asm:Lsc}.
Note that $\|g_+\|_Z$ being weakly lsc implies a slightly stronger statement.
If $x^k\wto x$ and $z^k\to 0$ in $Z$, then the nonexpansiveness of $z\mapsto z_+$
(which is just the projection onto $K_Z$) together with \ref{Asm:Lsc} implies that
\begin{equation}\label{Eq:lscConsequence}
    \liminf_{k\to\infty}\big\|(g(x^k)+z^k)_+\big\|_Z=
    \liminf_{k\to\infty}\big\|g_+(x^k)\big\|_Z \ge \|g_+(x)\|_Z.
\end{equation}
This fact will be used in the proof of the following theorem.

\begin{thm}\label{Thm:OptimalityG}
Suppose that \ref{Asm:Lsc} and Assumption \ref{Asm:Global} hold. Let $(x^k)$
be a sequence generated by Algorithm~\ref{Alg:ALM}, and let $\bar{x}$ be a
weak limit point of $(x^k)$. Then:
\begin{enumerate}[label=\textnormal{(\alph*)}]
   \item $\bar{x}$ is a global minimum of the function $\|g_+(x)\|_Z^2$.
   \item If $\bar{x}$ is feasible, then $\bar{x}$ is a solution of the
      optimization problem \eqref{Eq:Opt}.
\end{enumerate}
\end{thm}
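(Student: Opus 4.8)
The plan is to extract a weakly convergent subsequence and exploit the approximate global optimality from Assumption 2 together with the weak lower semicontinuity consequence \eqref{Eq:lscConsequence}. Let $\bar{x}$ be a weak limit point of $(x^k)$, so there is a subsequence (not relabeled) with $x^{k+1}\wto\bar{x}$. The key objects to track are the augmented Lagrangian values $L_{\rho_k}(x^{k+1},\BddMul^k)$. The whole proof rests on comparing these values against two natural competitors: an arbitrary feasible point (for part (b)) and an arbitrary point altogether (for part (a)).

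For part (a), I would first observe that for \emph{any} fixed $x\in X$, Assumption~\ref{Asm:Global} gives
\begin{equation*}
    f(x^{k+1})+\frac{\rho_k}{2}\left\|\left(g(x^{k+1})+\tfrac{\BddMul^k}{\rho_k}\right)_+\right\|_Z^2
    \le f(x)+\frac{\rho_k}{2}\left\|\left(g(x)+\tfrac{\BddMul^k}{\rho_k}\right)_+\right\|_Z^2+\varepsilon_k.
\end{equation*}
Dividing by $\rho_k$ and rearranging isolates the feasibility term on the left. Since $(\BddMul^k)$ is bounded in $Z$ (because $0\le\BddMul^k\le\BddMul^{\max}$) and $f$ is bounded below on the relevant set (or the $f$ terms are controlled), I want to show that $\BddMul^k/\rho_k\to 0$ in $Z$, after which \eqref{Eq:lscConsequence} applies to the left-hand side. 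The right-hand side, after division by $\rho_k$, should tend to $\tfrac12\|g_+(x)\|_Z^2$ because $(g(x)+\BddMul^k/\rho_k)_+\to g_+(x)$ in $Z$ by nonexpansiveness of the positive-part map. Taking $\liminf$ then yields $\|g_+(\bar{x})\|_Z^2\le\|g_+(x)\|_Z^2$ for every $x$, which is exactly global minimality of the infeasibility measure.

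The subtlety, and what I expect to be the \textbf{main obstacle}, is the behavior of $\rho_k$: if the penalty parameter stays bounded, then $\BddMul^k/\rho_k$ need not vanish, and the division argument above collapses. I would therefore split into two cases. If $\rho_k\to\infty$, then $\BddMul^k/\rho_k\to 0$ in $Z$ (boundedness of $\BddMul^k$ divided by a diverging scalar), and the argument above runs cleanly. If $(\rho_k)$ remains bounded, then by Step~3 the penalty update test \eqref{Eq:RhoTest} must eventually always hold, forcing $\|\min\{-g(x^{k+1}),\BddMul^k/\rho_k\}\|_Z\to 0$; by Lemma~\ref{Lem:Infzero} this drives the relevant products to zero and in fact implies $g_+(x^{k+1})\to 0$ in $Z$ along the sequence, so that by \eqref{Eq:lscConsequence} $\|g_+(\bar{x})\|_Z=0$, making $\bar{x}$ feasible and trivially a global minimizer of the (nonnegative) infeasibility. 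Reconciling these two cases into the single clean conclusion of (a) is the delicate bookkeeping step.

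For part (b), assuming $\bar{x}$ is feasible, I would take any feasible competitor $x$ (so $g(x)\le 0$, i.e.\ $g_+(x)=0$) and note that the penalty term for such $x$ satisfies $(g(x)+\BddMul^k/\rho_k)_+\le(\BddMul^k/\rho_k)_+=\BddMul^k/\rho_k$, so its squared $Z$-norm is bounded by $\|\BddMul^k\|_Z^2/\rho_k^2$. Plugging the feasible $x$ into the approximate-optimality inequality, the penalty contribution on the right is then at most $\|\BddMul^k\|_Z^2/(2\rho_k)$. Dropping the nonnegative penalty term on the left gives $f(x^{k+1})\le f(x)+\|\BddMul^k\|_Z^2/(2\rho_k)+\varepsilon_k$. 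Taking $\liminf$ and using weak lower semicontinuity of $f$ from \ref{Asm:Lsc} yields $f(\bar{x})\le f(x)$ for every feasible $x$; since $\bar{x}$ itself is feasible, it is a global solution of \eqref{Eq:Opt}. Here the only point requiring care is again controlling $\|\BddMul^k\|_Z^2/\rho_k$: this is immediate when $\rho_k\to\infty$, and in the bounded-$\rho_k$ case one falls back on the feasibility consequence established in part (a) to ensure the estimate still closes.
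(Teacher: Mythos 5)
Your overall route is the same as the paper's: split on whether $(\rho_k)$ stays bounded or diverges, compare $L_{\rho_k}(x^{k+1},\BddMul^k)$ with $L_{\rho_k}(x,\BddMul^k)$ via Assumption~\ref{Asm:Global}, and use \eqref{Eq:lscConsequence}. Part (a) is sound in both cases (the paper argues Case~2 by contradiction, you argue directly via $\liminf$ after dividing by $\rho_k$; both work, and your lower bound on $f(x^{k+1})/\rho_k$ via weak lsc of $f$ is the same ingredient the paper uses). One cosmetic point: in the bounded-$\rho_k$ case, $g_+(x^{k+1})\to 0$ does not come from Lemma~\ref{Lem:Infzero}; it follows from the pointwise estimate $g_+(x^{k+1})\le\bigl|\min\{-g(x^{k+1}),\BddMul^k/\rho_k\}\bigr|$, valid because $\BddMul^k\ge 0$.

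The genuine gap is in part (b) when $(\rho_k)$ is bounded. Your estimate reads $f(x^{k+1})\le f(x)+\|\BddMul^k\|_Z^2/(2\rho_k)+\varepsilon_k$ after \emph{dropping} the nonnegative penalty term on the left, and the error term $\|\BddMul^k\|_Z^2/(2\rho_k)$ simply does not vanish when $\rho_k$ is bounded (take $\BddMul^k\equiv\BddMul^{\max}\ne 0$ and $\rho_k\equiv\rho_0$). Falling back on ``the feasibility consequence from part (a)'' does not close this: knowing $g_+(x^{k+1})\to 0$, or even $\bar x$ feasible, gives no cancellation of that term. The correct repair --- and what the paper does --- is to \emph{keep} the left-hand penalty term and use the identity
\begin{equation*}
   \left(g(x^{k+1})+\frac{\BddMul^k}{\rho_k}\right)_+=\frac{\BddMul^k}{\rho_k}-\min\left\{-g(x^{k+1}),\frac{\BddMul^k}{\rho_k}\right\},
\end{equation*}
where the minimum tends to $0$ in $Z$ by \eqref{Eq:RhoTest}. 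Expanding the square, the left penalty term equals $\tfrac{\rho_k}{2}\|\BddMul^k/\rho_k\|_Z^2+o(1)$ (since $\rho_k$ and $\BddMul^k/\rho_k$ are bounded), so it cancels the right-hand penalty bound up to $o(1)$ and only then does $f(\bar x)\le f(x)$ follow from weak lower semicontinuity. This requires the full strength of $\min\{-g(x^{k+1}),\BddMul^k/\rho_k\}\to 0$, not merely $g_+(x^{k+1})\to 0$.
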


\begin{proof}
(a): We first consider the case where $(\rho_k)$ is bounded. Recalling
\eqref{Eq:RhoTest}, we obtain
\begin{equation*}
    \|g_+(x^{k+1})\|_Z\le
    \left\| \min \left\{ -g(x^{k+1}), \frac{\BddMul^k}{\rho_k} \right\} \right\|_Z 
    \to 0.
\end{equation*}
Hence \ref{Asm:Lsc} implies that $\bar{x}$ is feasible and the assertion 
follows trivially.

Next, we consider the case where $\rho_k\to\infty $. Let $\mathcal{K}\subset\N$
be such that $x^{k+1}\wto_{\mathcal{K}}\bar{x}$ and assume that there is an
$x\in X$ with $\|g_+(x)\|_Z^2<\|g_+(\bar{x})\|_Z^2$.
By \eqref{Eq:lscConsequence}, the boundedness of $(\BddMul^k)$, and the fact 
that $\rho_k\to\infty$, there is a constant $c>0$ such that
\begin{equation*}
   \left\|\left(g(x^{k+1})+\frac{\BddMul^k}{\rho_k}\right)_+\right\|_Z^2 >
   \left\|\left(g(x)+\frac{\BddMul^k}{\rho_k}\right)_+\right\|_Z^2+c
\end{equation*}
holds for all $k\in\mathcal{K}$ sufficiently large. Hence,
\begin{equation*}
   L_{\rho_k}(x^{k+1},\BddMul^k)>L_{\rho_k}(x,\BddMul^k)+\frac{\rho_k c}{2}+f(x^{k+1})-f(x).
\end{equation*}
Using Assumption~\ref{Asm:Global}, we arrive at the inequality
\begin{equation*}
    \varepsilon_k > \frac{\rho_k c}{2}+f(x^{k+1})-f(x),
\end{equation*}
where $\varepsilon_k\to 0$. Since $(f(x^{k+1}))_{\mathcal{K}}$ is bounded from
below by the weak lower semicontinuity of $f$, this is a contradiction.\medskip

\noindent
(b): Let $\mathcal{K}\subset\N$ be such that $x^{k+1}\wto_{\mathcal{K}}\bar{x}$,
and let $x$ be any other feasible point. From Assumption~\ref{Asm:Global},
we get
\begin{equation*}
   L_{\rho_k}(x^{k+1},\BddMul^k)\le L_{\rho_k}(x,\BddMul^k)+\varepsilon_k.
\end{equation*}
Again, we distinguish two cases. First assume that $ \rho_k \to \infty $.
By the definition of the augmented Lagrangian, we have (recall that $x$
is feasible)
\begin{equation*}
   f(x^{k+1})\le f(x)+\frac{\rho_k}{2} \left\|\left( g(x)+\frac{\BddMul^k}{\rho_k}
   \right)_+ \right\|_Z^2 +\varepsilon_k \le
   f(x)+\frac{\|\BddMul^k\|_Z^2}{2 \rho_k}+\varepsilon_k.
\end{equation*}
Taking limits in the above inequality, using the boundedness of $(\BddMul^k)$
and the weak lower semicontinuity of $f$, we get $f(\bar{x})\le f(x)$.

Next, consider the case where $(\rho_k)$ is bounded. Using the feasibility
of $x$ and a similar inequality to above, it follows that
\begin{equation*}
   f(x^{k+1})+\frac{\rho_k}{2} \left\|\left( g(x^{k+1})+
   \frac{\BddMul^k}{\rho_k} \right)_+ \right\|_Z^2 \le
   f(x)+\frac{\rho_k}{2}\left\|\frac{\BddMul^k}{\rho_k}\right\|_Z^2+\varepsilon_k.
\end{equation*}
But
\begin{equation*}
   \left( g(x^{k+1})+\frac{\BddMul^k}{\rho_k} \right)_+ = \frac{\BddMul^k}{\rho_k} -
   \min\left\{ -g(x^{k+1}), \frac{\BddMul^k}{\rho_k} \right\}
\end{equation*}
and the latter part tends to $0$ because of \eqref{Eq:RhoTest}.
This implies $f(\bar{x})\le f(x)$.
\end{proof}

\noindent
Note that, for part (a) of the theorem, we did not fully use $\varepsilon_k
\downarrow 0$; we only used the fact that $(\varepsilon_k)$ is bounded.
Hence, this result remains true under weaker conditions than those given
in Assumption~\ref{Asm:Global}. Furthermore, note that
Theorem~\ref{Thm:OptimalityG} does not require any differentiability
assumption, though, in practice, the approximate solution of the subproblems
in (S.2) of Algorithm~\ref{Asm:Global} might be easier under differentiability
assumptions. Finally, note that, in view of statement (a), the weak
limit point $ \bar x $ is always feasible if the feasible set of
the optimization problem \eqref{Eq:Opt} is nonempty, i.e.\ in this case
the feasibility assumption from statement (b) is always satisfied.
On the other hand, if the feasible set is empty, it is interesting to
note that statement (a) still holds, whereas the assumption from
statement (b) cannot be satisfied.

We now turn to a convergence theorem which guarantees, under certain assumptions,
the (strong) convergence of the whole sequence $(x^k)$. Such an assertion usually
requires a suitable convexity or second-order condition and, in fact, there are
many results along this line in the context of augmented Lagrangian methods,
e.g.\ \cite{Bertsekas1982,Fernandez2012} in finite or \cite{Hintermueller2006}
in infinite dimensions. Here, we prove a theorem which shows that our method
converges globally for convex problems where the objective function is strongly
convex. Note that, in the convex setting, the lower semicontinuity assumption
\ref{Asm:Lsc} is fairly weak since it is always implied by (ordinary) continuity.
Moreover, let us emphasize that the theorem below does not require any Lagrange
multiplier or constraint qualification.

\begin{thm}\label{Thm:Convex}
    Suppose that \ref{Asm:Lsc} and Assumption~\ref{Asm:Global} hold, and that $X$ is
    reflexive. Furthermore, assume that $g$ is convex, that $f$ is strongly
    convex, and that the feasible set of \eqref{Eq:Opt} is nonempty. Then \eqref{Eq:Opt}
    admits a unique solution $\bar{x}$, and the sequence $(x^k)$ from
    Algorithm~\ref{Alg:ALM} converges (strongly) to $\bar{x}$.
\end{thm}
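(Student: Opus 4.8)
The plan is to settle existence and uniqueness of the solution first, and then to exploit the strong convexity of the augmented Lagrangian to drive the whole sequence to $\bar x$. For existence I would observe that the feasible set $C=\{x:\|g_+(x)\|_Z=0\}$ is nonempty by hypothesis, convex because $g$ is order-convex, and — being a sublevel set of the weakly lower semicontinuous function $\|g_+\|_Z$ (cf.\ \ref{Asm:Lsc}) — weakly sequentially closed. Strong convexity of $f$ makes it coercive, bounded below, and strictly convex; together with the weak lower semicontinuity of $f$ and reflexivity of $X$, the direct method of the calculus of variations yields a minimizer of $f$ over $C$, unique by strict convexity. This is the desired $\bar x$.

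The heart of the argument is a single estimate. Writing $\BddMul^k/\rho_k\in K_Z$ for the (nonnegative) scaled multiplier, I would note that $x\mapsto\|(g(x)+\BddMul^k/\rho_k)_+\|_Z^2$ is convex: the outer map $z\mapsto\|z_+\|_Z^2$ is convex and, by the order-monotonicity of the $Z$-norm \eqref{Eq:NormMonotone}, nondecreasing with respect to the order on $Z$, while $g$ is order-convex, so the composition is convex. Hence $L_{\rho_k}(\cdot,\BddMul^k)$ is strongly convex with the same modulus $\mu$ as $f$. Testing the $\varepsilon_k$-optimality of $x^{k+1}$ (Assumption~\ref{Asm:Global}) against the midpoint $\tfrac12(x^{k+1}+\bar x)$ and invoking the strong-convexity inequality at that midpoint, I obtain
\[\frac{\mu}{8}\|x^{k+1}-\bar x\|^2 \le \varepsilon_k - \tfrac12\Delta_k, \qquad \Delta_k := L_{\rho_k}(x^{k+1},\BddMul^k)-L_{\rho_k}(\bar x,\BddMul^k),\]
where in addition $\Delta_k\le\varepsilon_k$ by optimality.

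Everything then reduces to lower-bounding $\Delta_k$. Using feasibility of $\bar x$ (so that $(g(\bar x)+\BddMul^k/\rho_k)_+\le\BddMul^k/\rho_k$, hence $\|(g(\bar x)+\BddMul^k/\rho_k)_+\|_Z\le\|\BddMul^k/\rho_k\|_Z$ by \eqref{Eq:NormMonotone}) together with the identity $(g(x^{k+1})+\BddMul^k/\rho_k)_+=\BddMul^k/\rho_k-\min\{-g(x^{k+1}),\BddMul^k/\rho_k\}$, I would expand the augmentation difference to get
\[\Delta_k \ge \big(f(x^{k+1})-f(\bar x)\big) - \scal{\BddMul^k,m_k} + \tfrac{\rho_k}{2}\|m_k\|_Z^2, \qquad m_k := \min\{-g(x^{k+1}),\BddMul^k/\rho_k\}.\]
A crude bound, namely $-\scal{\BddMul^k,m_k}+\tfrac{\rho_k}{2}\|m_k\|_Z^2\ge -\|\BddMul^{\max}\|_Z^2/(2\rho_0)$ (Young's inequality and $\rho_k\ge\rho_0$), combined with $f$ being bounded below, already shows via the key estimate that $(x^k)$ is bounded. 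Reflexivity then supplies weak limit points, each of which is feasible (by part~(a) of Theorem~\ref{Thm:OptimalityG} and nonemptiness of $C$) and hence a solution of \eqref{Eq:Opt} (by part~(b)); by uniqueness every weak limit point equals $\bar x$, so $x^k\wto\bar x$.

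Finally, for strong convergence I would sharpen this to $\liminf_k\Delta_k\ge 0$. The term $f(x^{k+1})-f(\bar x)$ has nonnegative liminf by weak lower semicontinuity of $f$ and $x^{k+1}\wto\bar x$. For the remainder $-\scal{\BddMul^k,m_k}+\tfrac{\rho_k}{2}\|m_k\|_Z^2$ I would distinguish, exactly as in the proof of Theorem~\ref{Thm:OptimalityG}, the two cases of the penalty update: if $\rho_k\to\infty$, completing the square gives the lower bound $-\|\BddMul^k\|_Z^2/(2\rho_k)\to 0$; if $(\rho_k)$ stays bounded, the test \eqref{Eq:RhoTest} forces $\|m_k\|_Z\to 0$ geometrically, whence $\scal{\BddMul^k,m_k}\to 0$ (boundedness of $(\BddMul^k)$) and $\tfrac{\rho_k}{2}\|m_k\|_Z^2\to 0$. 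In either case $\liminf_k\Delta_k\ge 0$, and feeding this into the key estimate yields $\|x^{k+1}-\bar x\|\to 0$. I expect the bounded-$\rho_k$ case to be the main obstacle: there the penalty term at the solution $\bar x$ does not vanish, so the argument genuinely relies on the geometric decay guaranteed by \eqref{Eq:RhoTest} together with the feasibility-based cancellation encoded in the lower bound for $\Delta_k$.
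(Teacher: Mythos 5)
Your proof is correct, and it reaches the conclusion by a recognizably different decomposition, although both arguments ultimately rest on the same two ingredients: a midpoint strong-convexity inequality and the case distinction on $(\rho_k)$ carried out in the proof of Theorem~\ref{Thm:OptimalityG}(b). The paper applies strong convexity to $f$ alone, obtaining
\begin{equation*}
  \frac{c}{8}\|x^{k+1}-\bar{x}\|_X^2 \le \frac{f(x^{k+1})+f(\bar{x})}{2}-f\mleft(\frac{x^{k+1}+\bar{x}}{2}\mright),
\end{equation*}
and drives the right-hand side to zero by first extracting $\limsup_{k}f(x^{k+1})\le f(\bar{x})$ from the proof of Theorem~\ref{Thm:OptimalityG}(b) and then invoking weak lower semicontinuity of $f$ along both $(x^{k+1})$ and the midpoint sequence. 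You instead observe that the augmentation term is convex (the convex, order-monotone map $z\mapsto\|z_+\|_Z^2$ composed with the order-convex $g$), so that $L_{\rho_k}(\cdot,\BddMul^k)$ inherits the modulus of $f$, and you test the $\varepsilon_k$-optimality at the midpoint to obtain $\tfrac{\mu}{8}\|x^{k+1}-\bar{x}\|_X^2\le\varepsilon_k-\tfrac12\Delta_k$; the remaining work, namely $\liminf_k\Delta_k\ge 0$, reproduces in expanded form (feasibility of $\bar{x}$, the identity expressing $(g+\BddMul^k/\rho_k)_+$ via $m_k$, completing the square when $\rho_k\to\infty$, geometric decay of $\|m_k\|_Z$ from \eqref{Eq:RhoTest} when $(\rho_k)$ is bounded) exactly the computation the paper delegates to Theorem~\ref{Thm:OptimalityG}(b). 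Your version costs one extra observation --- the convexity of the augmentation, which is where the convexity of $g$ enters for you beyond the weak closedness of the feasible set --- but in exchange yields a single quantitative estimate in which the midpoint value of $f$ never has to be controlled separately; the paper's version is shorter because it reuses Theorem~\ref{Thm:OptimalityG}(b) verbatim. The existence/uniqueness step and the passage from boundedness to $x^k\wto\bar{x}$ are identical in both arguments.
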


\begin{proof}
    Under the given assumptions, it is easy to show that $f$ is coercive and that
    the feasible set of \eqref{Eq:Opt} is closed and convex, hence weakly closed.
    Therefore, existence and uniqueness of the solution $\bar{x}$ follow from
    standard arguments.
    
    Now, denoting by $c>0$ the modulus of convexity of $f$, it follows that
    \begin{equation}\label{Eq:ThmConvex1}
        \frac{c}{8} \|x^{k+1}-\bar{x}\|_X^2 \le \frac{f(x^{k+1})+f(\bar{x})}{2}-
        f\mleft( \frac{x^{k+1}+\bar{x}}{2} \mright)
    \end{equation}
    for all $k$. By the proof of Theorem \ref{Thm:OptimalityG} (b), it is easy to
    see that $\limsup_{k\to\infty}f(x^{k+1})\le f(\bar{x})$. Hence, taking into
    account that $f$ is bounded from below, it follows from \eqref{Eq:ThmConvex1}
    that $(x^k)$ is bounded. Since $X$ is reflexive and every weak limit point
    of $(x^k)$ is a solution of \eqref{Eq:Opt} by Theorem \ref{Thm:OptimalityG},
    we conclude that $x^k\wto \bar{x}$. In particular, the weak lower
    semicontinuity of $f$ together with $\limsup_{k\to\infty}f(x^{k+1})\le f(\bar{x})$
    implies that $f(x^{k+1})\to f(\bar{x})$. Moreover, we also have
    $f(\bar{x})\le \liminf_{k\to\infty}f\bigl( (x^k+\bar{x})/2 \bigr)$. Hence,
    \eqref{Eq:ThmConvex1} implies $\|x^{k+1}-\bar{x}\|_X\to 0$.
\end{proof}

\noindent
The above theorem also shows that strong convergence of the primal iterates is not
completely unrealistic, even in infinite dimensions. It may be possible to also prove
strong convergence by using some local condition (e.g.\ a suitable second-order
sufficient condition in a stationary point). However, we will now explore this
subject any further in the present paper.

\section{Sequential KKT conditions}\label{Sec:KKT}

Throughout this section, we assume that $f$ and $g$ are continuously 
Fr\'echet-differentiable on $X$, and discuss the KKT conditions of the
optimization problem \eqref{Eq:Opt}. Recalling that $K_Y$ is the nonnegative
cone in $Y$, we denote by
\begin{equation*}
    K_Y^+ := \{ f\in Y^* \mid \dual{f,y}\ge 0~
    \forall y\in K_Y \}
\end{equation*}
its dual cone. This enables us to define the KKT conditions as follows.

\begin{dfn}\label{Dfn:KKT}
A tuple $(x,\lambda)\in X\times K_Y^+$ is called a \emph{KKT point} of
\eqref{Eq:Opt} if
\begin{equation}\label{Eq:KKT}
   f'(x)+g'(x)^*\lambda=0, \quad g(x)\le 0, \quad\text{and}\quad
   \dual{\lambda,g(x)}=0.
\end{equation}
We also call $x\in X$ a \emph{KKT point} of $\eqref{Eq:Opt}$ if
$(x,\lambda)$ is a KKT point for some $\lambda$.
\end{dfn}

\noindent
From a practical perspective, when designing an algorithm for the solution
of \eqref{Eq:Opt}, we will expect the algorithm to generate a sequence which
satisfies the KKT conditions in an asymptotic sense. Hence, it will be
extremely important to discuss a sequential analogue of the KKT conditions.

\begin{dfn}\label{Dfn:AKKT}
We say that the \emph{asymptotic KKT (or AKKT) conditions} hold in a feasible
point $x\in X$ if there are sequences $x^k\to x$ and $(\lambda^k)\subset K_Y^+$
such that
\begin{equation}\label{Eq:AKKT}
   \quad f'(x^k)+g'(x^k)^* \lambda^k\to 0 \quad\text{and}\quad
   \dual{\lambda^k,g_-(x^k)}\to 0.
\end{equation}
\end{dfn}

\noindent
Asymptotic KKT-type conditions have previously been considered in the 
literature \cite{Andreani2011,Andreani2010,Birgin2014} for 
finite-dimensional optimization problems. Furthermore, 
in \cite{Birgin2014}, it is shown that AKKT is a necessary
optimality condition even in the absence of constraint qualifications. With
little additional work, this result can be extended to our infinite-dimensional
setting.

\begin{thm}\label{Thm:AKKT}
Suppose that \ref{Asm:Lsc}, \ref{Asm:C1} hold, and that $X$ is reflexive. Then
every local solution $\bar{x}$ of \eqref{Eq:Opt} satisfies the AKKT conditions.
\end{thm}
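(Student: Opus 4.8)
The plan is to adapt the classical ``localized external penalty'' argument to our setting, the only genuinely infinite-dimensional ingredients being reflexivity (to extract weakly convergent subsequences) and the weak lower semicontinuity from \ref{Asm:Lsc}. Since $\bar{x}$ is only a \emph{local} solution, fix $\delta>0$ with $f(\bar{x})\le f(x)$ for every feasible $x$ in the closed ball $\overline{B}:=\{x\in X\mid \|x-\bar{x}\|_X\le\delta\}$, choose a sequence $\rho_k\to\infty$, and consider the auxiliary problems
\begin{equation*}
   \min_{x\in\overline{B}}\ F_k(x):=f(x)+\frac{\rho_k}{2}\|g_+(x)\|_Z^2+\frac{1}{2}\|x-\bar{x}\|_X^2 .
\end{equation*}
Each $F_k$ is weakly lsc (by \ref{Asm:Lsc} together with the weak lower semicontinuity of the norm) and $\overline{B}$ is weakly sequentially compact since $X$ is reflexive; hence the Weierstrass theorem yields a minimizer $x^k\in\overline{B}$. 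The quadratic localization term $\tfrac12\|x-\bar{x}\|_X^2$ plays a double role: it singles out $\bar{x}$ among the feasible minimizers in $\overline{B}$ and, as we will see, forces \emph{strong} convergence $x^k\to\bar{x}$, which is exactly what the AKKT definition requires.

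First I would establish this strong convergence. From $F_k(x^k)\le F_k(\bar{x})=f(\bar{x})$ (using $g_+(\bar{x})=0$) and the fact that $f$ is bounded below on $\overline{B}$, the coefficient $\rho_k\to\infty$ forces $\|g_+(x^k)\|_Z\to 0$, while $(x^k)$ stays bounded. For any weak limit point $\hat{x}$ of $(x^k)$ along a subsequence $\mathcal{K}$, \ref{Asm:Lsc} gives $\|g_+(\hat{x})\|_Z\le\liminf_{\mathcal{K}}\|g_+(x^k)\|_Z=0$, so $\hat{x}$ is feasible; weak lower semicontinuity of $f$ and of $\|\cdot-\bar{x}\|_X^2$ then yields $f(\hat{x})+\tfrac12\|\hat{x}-\bar{x}\|_X^2\le\liminf_{\mathcal{K}}F_k(x^k)\le f(\bar{x})$. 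Since $\hat{x}\in\overline{B}$ is feasible, local optimality gives $f(\bar{x})\le f(\hat{x})$, whence $\|\hat{x}-\bar{x}\|_X=0$, i.e.\ $\hat{x}=\bar{x}$; as the limit point is unique, $x^k\wto\bar{x}$ for the whole sequence. The same chain of inequalities bounds $\limsup_k\tfrac12\|x^k-\bar{x}\|_X^2$ by $f(\bar{x})-\liminf_k f(x^k)\le 0$, upgrading this to $x^k\to\bar{x}$ strongly. In particular $x^k$ lies in the interior of $\overline{B}$ for all large $k$.

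Next I would read off the approximate stationarity. For large $k$ the point $x^k$ is an unconstrained local minimizer of $F_k$, and $f$ together with the penalty $\tfrac{\rho_k}{2}\|g_+(\cdot)\|_Z^2$ are Fr\'echet-differentiable by \ref{Asm:C1} (the chain rule gives the derivative $g'(x^k)^*e^*\bigl(\rho_k g_+(x^k)\bigr)$, using that $z\mapsto\tfrac12\|z_+\|_Z^2$ has derivative $z_+$ in $Z$). To avoid differentiating the norm itself, I would test the optimality of $x^k$ with directions $\pm d$: convexity of $t\mapsto\tfrac12\|x^k-\bar{x}+td\|_X^2$ bounds the one-sided derivative of the localization term by $\|x^k-\bar{x}\|_X\,\|d\|_X$, and the two resulting inequalities combine to
\begin{equation*}
   \bigl\|f'(x^k)+g'(x^k)^*e^*\bigl(\rho_k g_+(x^k)\bigr)\bigr\|_{X^*}\le\|x^k-\bar{x}\|_X\to 0 .
\end{equation*}
Setting $\lambda^k:=e^*\bigl(\rho_k g_+(x^k)\bigr)\in Y^*$, the identity $\scal{\phi,e(y)}=\dual{e^*\phi,y}$ together with $\rho_k g_+(x^k)\in K_Z$ shows $\lambda^k\in K_Y^+$, and the displayed estimate is precisely $f'(x^k)+g'(x^k)^*\lambda^k\to 0$.

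It remains to check the complementarity-type condition, which in fact holds \emph{exactly}: since $e$ commutes with the lattice operations, $\dual{\lambda^k,g_-(x^k)}=\scal{\rho_k g_+(x^k),g_-(x^k)}=\rho_k\scal{g(x^k)_+,g(x^k)_-}=0$ by the orthogonality $z_+\perp z_-$ in $Z$ recorded in Section~\ref{Sec:Prelims}. Together with $x^k\to\bar{x}$ this establishes \eqref{Eq:AKKT}. I expect the main obstacle to be the strong-convergence step rather than the stationarity bookkeeping: everything there hinges on combining the penalty estimate with \ref{Asm:Lsc} and local optimality in the right order, and on exploiting the localization term both to identify the limit as $\bar{x}$ and to turn weak into strong convergence. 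The directional-derivative device is what lets the argument go through for a general reflexive $X$ whose norm need not be differentiable.
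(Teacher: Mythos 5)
Your proof is correct and follows the same skeleton as the paper's: the localized penalty subproblems $\min_{x\in\overline{B}} f(x)+\tfrac{\rho_k}{2}\|g_+(x)\|_Z^2+\tfrac12\|x-\bar{x}\|_X^2$, existence of minimizers via \ref{Asm:Lsc} and reflexivity, the penalty estimate forcing $\|g_+(x^k)\|_Z\to 0$, identification of the weak limit as $\bar{x}$ via local optimality, the upgrade to strong convergence through the localization term, the multiplier $\lambda^k$ proportional to $g_+(x^k)$, and the exact complementarity $\dual{\lambda^k,g_-(x^k)}=0$ from $z_+\perp z_-$. The one place where you genuinely diverge is the stationarity step: the paper first renorms $X$ with an equivalent norm whose square is continuously differentiable (invoking Troyanski-type renorming theorems for reflexive spaces) and then writes down the exact first-order condition $f'(x^k)+2kg'(x^k)^*g_+(x^k)+q'(x^k-\bar{x})=0$, whereas you keep the original norm and test local optimality along $\pm d$, using convexity of the localization term to get the quantitative bound $\|f'(x^k)+g'(x^k)^*\lambda^k\|_{X^*}\le\|x^k-\bar{x}\|_X$. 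Your device is more elementary -- it removes the dependence on a nontrivial result from Banach space geometry and yields an explicit rate tied to $\|x^k-\bar{x}\|_X$ -- at the cost of producing only an inequality rather than an exact stationarity identity; for the purpose of establishing AKKT the two are interchangeable. The remaining bookkeeping (that $\lambda^k=e^*(\rho_k g_+(x^k))\in K_Y^+$, and that the whole sequence rather than a subsequence converges) is fine and, if anything, slightly sharper than what the paper records.
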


\begin{proof}
To simplify the proof, we assume that the squared norm $q(x):=\|x\|_X^2$ is continuously differentiable on $X$; since $X$ is reflexive, this is no restriction as $X$ can be renormed equivalently with
a continuously differentiable norm \cite{Fry2002,Troyanski1970}. By assumption, there is an $r>0$ such that $\bar{x}$ solves \eqref{Eq:Opt} on $B_r(\bar{x})$. Now, for $k\in\N$, consider the problem
\begin{equation}\label{Eq:ThmAKKT1}
    \min\ f(x)+k\|g_+(x)\|_Z^2+\|x-\bar{x}\|_X^2 \quad\text{s.t.}\quad
    x\in B_r(\bar{x}).
\end{equation}
Since the above objective function is weakly lsc and $B_r(\bar{x})$ is weakly
compact, this problem admits a solution $x^k$. Due to $(x^k)\subset B_r(\bar{x})$,
there is a $\mathcal{K}\subset\mathbb{N}$ such that $x^k\wto_{\mathcal{K}}\bar{y}$
for some $\bar{y}\in B_r(\bar{x})$. Since $x^k$ is a solution of
\eqref{Eq:ThmAKKT1}, we have
\begin{equation}\label{Eq:ThmAKKT2}
    f(x^k)+k\|g_+(x^k)\|_Z^2+\|x^k-\bar{x}\|_X^2 \le f(\bar{x})
\end{equation}
for every $k$. Dividing by $k$ and taking the limit $k\to_{\mathcal{K}}\infty$,
we obtain from \ref{Asm:Lsc} that $\|g_+(\bar{y})\|_Z=0$, i.e.\ $\bar{y}$ is 
feasible. By \eqref{Eq:ThmAKKT2}, we also obtain 
$f(\bar{y})+\|\bar{y}-\bar{x}\|_X^2\le f(\bar{x})$. But $f(\bar{x})\le f(\bar{y})$, hence $\bar{x}=\bar{y}$ and \eqref{Eq:ThmAKKT2} implies that $x^k\to_{\mathcal{K}}\bar{x}$.
In particular, we have $\|x^k-\bar{x}\|_X<r$ for sufficiently large 
$k\in\mathcal{K}$, and from \eqref{Eq:ThmAKKT1} we obtain
\begin{equation*}
    f'(x^k) + 2 k g'(x^k)^* g_+(x^k) + q' (x^k-\bar{x})=0.
\end{equation*}
Define $\lambda^k:=2 k g_+(x^k)$. Then $f'(x^k)+g'(x^k)^*\lambda^k
\to_{\mathcal{K}} -q'(0)=0$ and $\dual{\lambda^k,g_-(x^k)}=0$.
\end{proof}

\noindent
The above theorem also motivates our definition of the AKKT conditions. In
particular, it justifies the formulation of the complementarity condition as
$\dual{\lambda^k,g_-(x^k)}\to 0$, since the proof shows that $(\lambda^k)$
need not be bounded. Hence, the conditions
\begin{equation*}
    \min \{-g(x^k),\lambda^k\}\to_{\mathcal{K}}0, \quad
    \dual{\lambda^k,g(x^k)}\to_{\mathcal{K}}0,
    \quad\text{and}\quad \dual{\lambda^k,g_-(x^k)}\to_{\mathcal{K}} 0
\end{equation*}
are not equivalent. Note that the second of these conditions (which might
appear as the most natural formulation of the complementarity condition) is
often violated by practical algorithms \cite{Andreani2010}.

In order to get the (clearly desirable) implication ``AKKT $\Rightarrow$ KKT'',
we will need a suitable constraint qualification. In the finite-dimensional
setting, constraint qualifications such as MFCQ and 
CPLD \cite{Birgin2014,Qi2000}
have been used to enable this transition. However, in the infinite-dimensional
setting, our choice of constraint qualification is much more restricted. For
instance, we are not aware of any infinite-dimensional analogues of the (very
amenable) CPLD condition. Hence, we have decided to employ the Zowe-Kurcyusz
regularity condition \cite{Zowe1979}, which is known to be equivalent to the
Robinson condition \cite{Robinson1976} and to be a generalization of the
finite-dimensional MFCQ. It should be noted, however, that any condition
which guarantees ``AKKT $\Rightarrow$ KKT'' could be used in our analysis.

\begin{dfn}\label{Dfn:ZoweKurcyusz}
The \emph{Zowe-Kurcyusz condition} holds in a feasible point $x\in X$ if
\begin{equation}\label{Eq:ZoweKurcyusz}
   g'(x)X+\operatorname{cone}(K_Y+g(x))=Y,
\end{equation}
where $\operatorname{cone}(K_Y+g(x))$ is the conical hull of $K_Y+g(x)$ in $Y$.
\end{dfn}

\noindent
We note that the complete theory in this paper can be written down with
$ Y = Z $ only, so, formally, there seems to be no reason for introducing
the imbedded space $ Y $. One of the main reasons for the more general
framework considered here with an additional space $ Y $ is that 
suitable constraint qualifications like the above Zowe-Kurcyusz condition
are typically violated even in simple applications when formulated in 
$ Z $, whereas we will see in Section 7 that this condition easily holds in
suitable spaces $ Y $. We therefore stress the importance of 
Definition~\ref{Dfn:ZoweKurcyusz} being defined in $ Y $, and not in $ Z $.

Let us also remark that the applicability of the Zowe-Kurcyusz condition
very much depends on the particular structure of the constraints. For many
simple constraints, the operator $g'(x)$ is actually surjective, which implies
that \eqref{Eq:ZoweKurcyusz} holds trivially. This is the case, for instance,
when dealing with one-sided box constraints in a suitable function space.
(The case of two-sided box constraints is more difficult; see, e.g.,
\cite{Troeltzsch2010}.) Another case where the Zowe-Kurcyusz condition holds
automatically is if $K_Y$ has a nonempty interior and the problem satisfies
a linearized Slater condition, e.g.\ \cite[Eq.\ 6.18]{Troeltzsch2010}.

One of the most important consequences of the Zowe-Kurcyusz condition
is that the set of multipliers corresponding to a KKT point $x$ is bounded
\cite{Zowe1979}. From this point of view, it is natural to expect that
the sequence $(\lambda^k)$ from Definition \ref{Dfn:AKKT} is bounded,
provided the limit point $x$ satisfies the Zowe-Kurcyusz condition.

\begin{thm}\label{Thm:ZoweKurcyusz}
Suppose that \ref{Asm:C1} holds. Let $x\in X$ be a point which satisfies the
AKKT conditions, and let $(x^k)$, $(\lambda^k)$ be the corresponding sequences
from Definition~\ref{Dfn:AKKT}.
\begin{enumerate}[label=\textnormal{(\alph*)}]
   \item If $x$ satisfies the Zowe-Kurcyusz condition, then $(\lambda^k)$
      is bounded in $Y^*$.
   \item If \ref{Asm:Abs}, \ref{Asm:YWC} hold and $(\lambda^k)$ is bounded
   in $Y^*$, then $x$ is a KKT point.
\end{enumerate}
\end{thm}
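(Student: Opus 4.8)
The plan for part (a) is to use the quantitative (dual) form of the Zowe--Kurcyusz condition. Concretely, the regularity condition $g'(x)X+\operatorname{cone}(K_Y+g(x))=Y$ is equivalent, via the open mapping theorem for convex cones, to the existence of a constant $\beta>0$ with
\[
   \beta\,\|\mu\|_{Y^*}\le \|g'(x)^*\mu\|_{X^*}+\dual{\mu,-g(x)}\qquad\text{for all }\mu\in K_Y^+,
\]
where $\dual{\mu,-g(x)}\ge 0$ since $-g(x)\in K_Y$; cf.\ \cite{Zowe1979}. I would apply this with $\mu=\lambda^k$ and estimate the two terms on the right separately. For the first term I write $g'(x)^*\lambda^k=\bigl(f'(x^k)+g'(x^k)^*\lambda^k\bigr)-f'(x^k)+\bigl(g'(x)-g'(x^k)\bigr)^*\lambda^k$; the bracket tends to $0$ by AKKT stationarity, $\|f'(x^k)\|$ is bounded by continuity of $f'$, and $\|g'(x)-g'(x^k)\|\to 0$ by \ref{Asm:C1}, so $\|g'(x)^*\lambda^k\|_{X^*}\le M+o(1)\|\lambda^k\|_{Y^*}$ for some constant $M$. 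For the second term I route the complementarity through $x^k$: since $-g(x^k)=g_-(x^k)-g_+(x^k)\le g_-(x^k)$ and $\lambda^k\in K_Y^+$, I obtain $\dual{\lambda^k,-g(x)}\le\dual{\lambda^k,g_-(x^k)}+\|\lambda^k\|_{Y^*}\,\|g(x^k)-g(x)\|_Y$, where the first summand vanishes by AKKT and the second is $o(1)\|\lambda^k\|_{Y^*}$ by continuity of $g$. Collecting terms yields $\beta\|\lambda^k\|\le M+o(1)+o(1)\|\lambda^k\|$, and absorbing the last summand on the left for large $k$ gives boundedness. Note that this argument stays within \ref{Asm:C1}: the complementarity term is controlled purely through its AKKT value at $x^k$ together with $g(x^k)\to g(x)$, so the continuity of $y\mapsto y_-$ (i.e.\ \ref{Asm:Abs}) is never invoked.

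For part (b), the plan is to pass to the limit in the AKKT relations. Using boundedness of $(\lambda^k)$ and \ref{Asm:YWC}, I extract a subsequence with $\lambda^k\wto^*\bar{\lambda}$ in $Y^*$; as $K_Y^+$ is an intersection of weak-$^*$ closed half-spaces, it is weak-$^*$ closed, so $\bar{\lambda}\in K_Y^+$, while $g(x)\le 0$ holds by feasibility. For the stationarity equation I show $g'(x^k)^*\lambda^k\wto^*g'(x)^*\bar{\lambda}$ by testing against fixed $u\in X$ and splitting $\dual{\lambda^k,g'(x^k)u}-\dual{\bar{\lambda},g'(x)u}$ into $\dual{\lambda^k,(g'(x^k)-g'(x))u}$ (vanishing by boundedness of $(\lambda^k)$ and \ref{Asm:C1}) and $\dual{\lambda^k-\bar{\lambda},g'(x)u}\to 0$ (weak-$^*$ convergence); together with $f'(x^k)\to f'(x)$ and the fact that $f'(x^k)+g'(x^k)^*\lambda^k\to 0$ strongly, uniqueness of weak-$^*$ limits gives $f'(x)+g'(x)^*\bar{\lambda}=0$. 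For complementarity I invoke \ref{Asm:Abs} to get $g_-(x^k)\to g_-(x)$ strongly in $Y$ (since $x^k\to x$ and $y\mapsto y_-$ is continuous), and pass to the limit in $\dual{\lambda^k,g_-(x^k)}$ via the same split of a strongly convergent factor against a bounded weak-$^*$ convergent sequence, obtaining $\dual{\bar{\lambda},g_-(x)}=\lim\dual{\lambda^k,g_-(x^k)}=0$. Since $x$ is feasible, $g(x)=-g_-(x)$, whence $\dual{\bar{\lambda},g(x)}=0$, and $(x,\bar{\lambda})$ is a KKT point.

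The main obstacle is the quantitative dual estimate underlying part (a): unlike in finite dimensions, the mere pointwise positivity of $\mu\mapsto\|g'(x)^*\mu\|_{X^*}+\dual{\mu,-g(x)}$ on the unit sphere of $Y^*$ does not yield a uniform lower bound, so one genuinely needs the open mapping (Robinson--Ursescu) machinery to convert surjectivity of the convex cone $g'(x)X+\operatorname{cone}(K_Y+g(x))$ into the coercivity constant $\beta>0$. A second, subtler point pervading both parts is that every limit passage must pair weak-$^*$ convergence in $Y^*$ against \emph{strong} convergence in $Y$: this is exactly why \ref{Asm:Abs} is needed in part (b) to upgrade $x^k\to x$ to $g_-(x^k)\to g_-(x)$ in $Y$, and why in part (a) it is essential to keep the complementarity term at $x^k$, so that only the differentiability-based strong convergence $g(x^k)\to g(x)$ is used and \ref{Asm:Abs} can be dispensed with.
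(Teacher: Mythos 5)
Your proposal is correct and follows essentially the same route as the paper's proof: the paper uses the same openness statement from Zowe--Kurcyusz, namely $B_r^Y\subset g'(x)B_1^X+(K_Y+g(x))\cap B_1^Y$, and obtains your dual coercivity inequality implicitly by testing $\lambda^k$ against arbitrary $u$ in that ball, with the same perturbation estimates routing both the stationarity and the complementarity terms through $x^k$ (so that \ref{Asm:Abs} is indeed not needed in part (a)). Part (b) likewise matches the paper's limit passage, pairing the weak-$^*$ convergent multipliers against the strongly convergent quantities $g'(x^k)u$ and $g_-(x^k)$.
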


\begin{proof}
(a): In view of \cite[Thm.\ 2.1]{Zowe1979}, the Zowe-Kurcyusz condition
implies that there is an $r>0$ such that
\begin{equation*}
   B_r^Y \subset g'(x) B_1^X + (K_Y+g(x)) \cap B_1^Y,
\end{equation*}
where $B_r^X$ and $B_r^Y$ are the closed $r$-balls around zero in $X$ and $Y$,
respectively. By the AKKT conditions and \ref{Asm:C1}, there is a 
$k_0\in\N$ such that
\begin{equation*}
    \|g(x^k)-g(x)\|_Y\le \frac{r}{4}
    \quad\text{and}\quad
    \|g'(x^k)-g'(x)\|_{\mathcal{L}(X,Y)}\le
    \frac{r}{4}
\end{equation*}
for every $k\ge k_0$. Now, let $u\in B_r^Y$ and $k\ge k_0$. It follows that
$-u=g'(x)w+z$ with $\|w\|_X\le 1$ and $z=z_1+g(x)$, $\|z\|_Y\le 1$,
$z_1\in K_Y$. Furthermore, the AKKT conditions imply
\begin{align*}
    \dual{\lambda^k,z_1+g(x^k)}
    & = \dual{\lambda^k,z_1} +\dual{\lambda^k,g_+(x^k)} - \dual{\lambda^k,g_-(x^k)} \\
    & \ge -\dual{\lambda^k,g_-(x^k)} \\
    & \to 0,
\end{align*}
i.e.\ $\dual{\lambda^k,z_1+g(x^k)}$ is bounded from below. Using once again the
AKKT conditions, we see that $\dual{\lambda^k,g'(x^k)w}$ is also bounded, and
it follows that
\begin{align*}
    \dual{\lambda^k,u} & = - \dual{\lambda^k,g'(x)w} - \dual{\lambda^k,z_1+g(x)} \\
    & \le \frac{r}{4}\|\lambda^k\|_{Y^*} - \dual{\lambda^k,g'(x^k)w} +
    \frac{r}{4}\|\lambda^k\|_{Y^*} - \dual{\lambda^k,z_1+g(x^k)} \\
    & \le \frac{r}{2}\|\lambda^k\|_{Y^*} + C
\end{align*}
for some constant $C>0$. We conclude that
\begin{equation*}
   \|\lambda^k\|_{Y^*}=\sup_{\|u\|\le r}\dual{\lambda^k,\frac{1}{r}u}\le \frac{1}{r}
   \left(C+\frac{r}{2}\|\lambda^k\|_{Y^*}\right)
\end{equation*}
and, hence, $\|\lambda^k\|_{Y^*}\le 2C/r$.\medskip

\noindent
(b): Since $(\lambda^k)$ is bounded in $Y^*$ and the unit ball in $Y^*$ is
weak-$^*$ sequentially compact by \ref{Asm:YWC}, 
there is a $\mathcal{K}\subset\N$ such that
$\lambda^k\wto_{\mathcal{K}}^*\lambda$ for some $\lambda\in Y^*$. Using
$\lambda^k \in K_Y^+$ for all $k\in\N$, it follows that
\begin{equation*}
    \dual{\lambda,y}=\lim_{k\in\mathcal{K}}\dual{\lambda^k,y}\ge 0
\end{equation*}
for every $y\in K_Y$. In other words, $\lambda\in K_Y^+$. Hence, taking the
limit in the AKKT conditions and using $g_-(x^k)\to g_{-}(x) = g(x)$ in $Y$,
which is a consequence of \ref{Asm:Abs} and the feasibility of $ x $, we
see that $(x,\lambda)$ satisfies the KKT conditions.
\end{proof}

\noindent
The above theorem is a generalization of a well-known result for the
MFCQ constraint qualification in finite dimensions. Recall that, for
$Y=\R^m$ with the natural ordering, the Zowe-Kurcyusz condition is
equivalent to MFCQ \cite{Zowe1979}.

\section{Convergence to KKT Points}\label{Sec:Convergence}

We now discuss the convergence properties of Algorithm~\ref{Alg:ALM}
from the perspective of KKT points. To this end, we make the following
assumption.

\begin{asm}\label{Asm:AKKT}
In Step~2 of Algorithm~\ref{Alg:ALM}, we obtain $x^{k+1}$ such that
$L_{\rho_k}'(x^{k+1},\BddMul^k)\to 0$ as $k\to\infty$.
\end{asm}

\noindent
The above is a very natural assumption which states that $x^{k+1}$ is an
(approximate) stationary point of the respective subproblem. Note that,
from \eqref{Eq:AL}, we obtain the following formula for the derivative of
$L_{\rho_k}$ with respect to $x$:
\begin{align*}
   L_{\rho_k}'(x^{k+1},\BddMul^k)
   & = f'(x^{k+1})+g'(x^{k+1})^* (\BddMul^k+\rho_k g(x^{k+1}))_+ \\
   & = f'(x^{k+1})+g'(x^{k+1})^* \lambda^{k+1}.
\end{align*}

\noindent
Our further analysis is split into a discussion of feasibility and optimality.
Regarding the feasibility aspect, note that we can measure the infeasibility
of a point $x$ by means of the function $\|g_+(x)\|_Z^2$. By standard projection
theorems, this is a Fr\'echet-differentiable function and its derivative is 
given by $D\|g_+(x)\|_Z^2=2 g'(x)^* g_+(x)$, cf.\ 
\cite[Cor.\ 12.31]{Bauschke2011}. This will be used in the proof of
the following theorem.

\begin{thm}\label{Thm:Feasibility}
Suppose that \ref{Asm:C1} and Assumption \ref{Asm:AKKT} hold. If $(x^k)$ is
generated by Algorithm~\ref{Alg:ALM} and $\bar{x}$ is a limit point of $(x^k)$,
then $D\|g_+(\bar{x})\|_Z^2=0$.
\end{thm}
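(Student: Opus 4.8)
The plan is to argue along the bounded-versus-unbounded dichotomy for the penalty parameters $(\rho_k)$ that recurs throughout the paper. Since $\bar x$ is a (strong) limit point of $(x^k)$, I would fix an infinite index set $\mathcal{K}$ with $x^{k+1}\to_{\mathcal{K}}\bar x$ in $X$, and record the continuity facts supplied by \ref{Asm:C1}: along $\mathcal{K}$ one has $f'(x^{k+1})\to f'(\bar x)$ in $X^*$, $g(x^{k+1})\to g(\bar x)$ in $Y$ (hence in $Z$), and $g'(x^{k+1})\to g'(\bar x)$ in $\mathcal{L}(X,Y)$, so that $g'(x^{k+1})^*\to g'(\bar x)^*$ in $\mathcal{L}(Y^*,X^*)$. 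By the derivative formula $D\|g_+(x)\|_Z^2=2g'(x)^*g_+(x)$ recalled before the theorem, the claim reduces to proving $g'(\bar x)^*g_+(\bar x)=0$ in $X^*$.

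First I would treat the case where $(\rho_k)$ is bounded. Then the penalty is increased only finitely often, so the test \eqref{Eq:RhoTest} holds for all large $k$ and, exactly as in the proof of Theorem~\ref{Thm:OptimalityG}(a), forces $\|g_+(x^{k+1})\|_Z\to 0$. Passing to $\mathcal{K}$ and using continuity of $g$ together with nonexpansiveness of the $Z$-projection $z\mapsto z_+$, I obtain $g_+(\bar x)=0$, which yields $g'(\bar x)^*g_+(\bar x)=0$ trivially (indeed $\bar x$ is even feasible in this case).

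The substantive case is $\rho_k\to\infty$. Here I would divide the stationarity identity
\[
   L_{\rho_k}'(x^{k+1},\BddMul^k)=f'(x^{k+1})+g'(x^{k+1})^*\lambda^{k+1}
\]
by $\rho_k$. Assumption~\ref{Asm:AKKT} gives $L_{\rho_k}'(x^{k+1},\BddMul^k)\to 0$, so the divided left-hand side still tends to $0$, while $\rho_k^{-1}f'(x^{k+1})\to 0$ because $f'(x^{k+1})$ is bounded along $\mathcal{K}$ and $\rho_k\to\infty$. The key computation is
\[
   \frac{\lambda^{k+1}}{\rho_k}=\Bigl(\tfrac{\BddMul^k}{\rho_k}+g(x^{k+1})\Bigr)_+;
\]
since $0\le\BddMul^k\le\BddMul^{\max}$ keeps $(\BddMul^k)$ bounded in $Z$ and $\rho_k\to\infty$, we have $\BddMul^k/\rho_k\to 0$ in $Z$, so with $g(x^{k+1})\to g(\bar x)$ in $Z$ and nonexpansiveness of $z\mapsto z_+$ it follows that $\lambda^{k+1}/\rho_k\to g_+(\bar x)$ in $Z$. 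Viewing these elements in $Y^*$ via the Gelfand triple \eqref{Eq:Embedding} and combining the norm convergence $g'(x^{k+1})^*\to g'(\bar x)^*$ with the norm convergence of $\lambda^{k+1}/\rho_k$, I would pass to the limit in the product to get $g'(x^{k+1})^*(\lambda^{k+1}/\rho_k)\to g'(\bar x)^*g_+(\bar x)$ in $X^*$. As this quantity also tends to $0$, the desired identity follows.

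The hard part will be the limit passage in the $\rho_k\to\infty$ case, and in particular the bookkeeping of spaces: $\lambda^{k+1}/\rho_k$ naturally lives in $Z$, whereas the adjoint $g'(x^{k+1})^*$ acts on $Y^*$, so one must transport the convergence from $Z$ to $Y^*$ through $Z\cong Z^*\hookrightarrow Y^*$ and confirm the product of the limits is the limit of the products. This last point is precisely where \emph{norm} (not merely weak) convergence of the operators $g'(x^{k+1})^*$ is essential, which is why \ref{Asm:C1} together with the strong convergence $x^{k+1}\to_{\mathcal{K}}\bar x$ is used.
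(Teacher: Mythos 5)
Your proposal is correct and follows essentially the same route as the paper: dispose of the bounded-$\rho_k$ case via the test \eqref{Eq:RhoTest} exactly as in Theorem~\ref{Thm:OptimalityG}(a), then for $\rho_k\to\infty$ divide the stationarity condition by $\rho_k$ and pass to the limit using the boundedness of $(\BddMul^k)$ and $(f'(x^{k+1}))_{\mathcal{K}}$. Your extra bookkeeping of the $Z$-versus-$Y^*$ identifications and the nonexpansiveness of $z\mapsto z_+$ merely makes explicit what the paper leaves implicit in the step ``it follows that $g'(x^{k+1})^*g_+(x^{k+1})\to_{\mathcal{K}}0$.''
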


\begin{proof}
Let $\mathcal{K}\subset\N$ be such that $x^{k+1}\to_{\mathcal{K}} \bar{x}$.
If $(\rho_k)$ is bounded, then we can argue as in the proof of Theorem
\ref{Thm:OptimalityG} (a) and conclude that $\bar{x}$ is feasible. Hence,
there is nothing to prove. Now, assume that $\rho_k\to\infty$. By
Assumption~\ref{Asm:AKKT}, we have
\begin{equation*}
   f'(x^{k+1})+g'(x^{k+1})^* (\BddMul^k+\rho_k g(x^{k+1}))_+\to 0.
\end{equation*}
Dividing by $\rho_k$ and using the boundedness of $(\BddMul^k)$ and
$(f'(x^{k+1}))_{\mathcal{K}}$, it follows that
\begin{equation*}
   g'(x^{k+1})^* g_+(x^{k+1})\to_{\mathcal{K}} 0.
\end{equation*}
This completes the proof.
\end{proof}

\noindent
Similarly to Theorem \ref{Thm:OptimalityG}, we remark that the above result
does not fully use the fact that $L_{\rho_k}'(x^{k+1},\BddMul^k)\to 0$ and remains
valid if this sequence is only bounded.

We now turn to the optimality of limit points of Algorithm~\ref{Alg:ALM}.
To this end, recall that Assumption~\ref{Asm:AKKT} implies that
\begin{equation}\label{Eq:OptimalityAKKT}
   f'(x^{k+1})+g'(x^{k+1})^*\lambda^{k+1}\to 0,
\end{equation}
which already suggests that the sequence of tuples $(x^k,\lambda^k)$ satisfies
AKKT for the optimization problem \eqref{Eq:Opt}. In fact, the only missing
ingredient is the asymptotic complementarity of $g$ and $\lambda$. We deal
with this issue in two steps. First, we consider the case where $(\rho_k)$ is
bounded. In this case, we even obtain the (exact) KKT conditions without any
further assumptions.

\begin{thm}\label{Thm:OptimalityB}
Suppose that \ref{Asm:C1} and Assumption \ref{Asm:AKKT} hold. Let $(x^k)$ be
generated by Algorithm~\ref{Alg:ALM} and assume that $(\rho_k)$ is bounded.
If $\bar{x}$ is a limit point of $(x^k)$, then $\bar{x}$ satisfies the KKT conditions of \eqref{Eq:Opt} with a multiplier in $Z$.
\end{thm}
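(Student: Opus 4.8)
The plan is to produce a multiplier $\lambda\in K_Z\subset Z$ and, viewing it as an element of $Y^*$ through the embedding $Z\cong Z^*\hookrightarrow Y^*$, to verify the four relations of Definition~\ref{Dfn:KKT} for the pair $(\bar x,\lambda)$. Fix $\mathcal K\subset\N$ with $x^{k+1}\to_{\mathcal K}\bar x$. Since $(\rho_k)$ is bounded and nondecreasing, it is eventually constant, so the test \eqref{Eq:RhoTest} holds for all large $k$ and, because $\tau\in(0,1)$, forces $m^k:=\min\{-g(x^{k+1}),\BddMul^k/\rho_k\}\to 0$ in $Z$. Exactly as in the proof of Theorem~\ref{Thm:OptimalityG}(a) this gives $\|g_+(x^{k+1})\|_Z\to 0$; together with the continuity of $g$ from \ref{Asm:C1} and of $z\mapsto z_+$ on $Z$, it follows that $g_+(\bar x)=0$, i.e.\ $\bar x$ is feasible.

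To build the multiplier, set $\lambda^{k+1}:=(\BddMul^k+\rho_k g(x^{k+1}))_+$, so that Assumption~\ref{Asm:AKKT} reads $f'(x^{k+1})+g'(x^{k+1})^*\lambda^{k+1}\to 0$, cf.\ \eqref{Eq:OptimalityAKKT}. As $(\BddMul^k)$, $(\rho_k)$ are bounded and $g(x^{k+1})\to_{\mathcal K}g(\bar x)$ in $Z$, the sequence $(\lambda^{k+1})_{\mathcal K}$ is bounded in the Hilbert space $Z$; after passing to a subsequence (not relabeled) we may assume $\lambda^{k+1}\wto_{\mathcal K}\lambda$ for some $\lambda\in Z$. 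Since each $\lambda^{k+1}$ lies in the closed convex (hence weakly closed) cone $K_Z$, so does $\lambda$, and therefore $\lambda\in K_Y^+$. Note that, in contrast to the transition in Theorem~\ref{Thm:ZoweKurcyusz}, no constraint qualification is needed here: the boundedness of $(\rho_k)$ alone keeps the multiplier bounded in $Z$.

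For stationarity I would pass to the limit in $f'(x^{k+1})+g'(x^{k+1})^*\lambda^{k+1}\to 0$. With $T_k:=e\,g'(x^{k+1})$ and $T:=e\,g'(\bar x)$ in $\mathcal L(X,Z)$, assumption \ref{Asm:C1} gives $T_k\to T$ in operator norm, and the splitting $T_k^*\lambda^{k+1}-T^*\lambda=(T_k-T)^*\lambda^{k+1}+T^*(\lambda^{k+1}-\lambda)$ shows that the first summand is norm-null (operator norm times a bounded sequence) while the second tends to $0$ weak-$^*$ in $X^*$ (test against a fixed $x\in X$ and use $\lambda^{k+1}\wto\lambda$ in $Z$). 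Combining this with $f'(x^{k+1})\to f'(\bar x)$ and the uniqueness of weak-$^*$ limits yields $f'(\bar x)+g'(\bar x)^*\lambda=0$.

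The remaining and principal difficulty is the complementarity $\dual{\lambda,g(\bar x)}=0$. Here I would exploit the identity $\lambda^{k+1}=\BddMul^k-\rho_k m^k$ to write
\[
   \scal{\lambda^{k+1},g(x^{k+1})}=\scal{\BddMul^k,g(x^{k+1})}-\rho_k\scal{m^k,g(x^{k+1})}.
\]
The last term vanishes in the limit because $(\rho_k)$ is bounded, $m^k\to 0$, and $(g(x^{k+1}))$ is bounded. For the first term I would apply Lemma~\ref{Lem:Infzero} with $a^k=\BddMul^k/\rho_k$ and $b^k=-g(x^{k+1})$ (both bounded, with $\min\{a^k,b^k\}=m^k\to 0$) to obtain $\scal{\BddMul^k/\rho_k,-g(x^{k+1})}\to 0$, and then multiply by the bounded factor $-\rho_k$ to conclude $\scal{\BddMul^k,g(x^{k+1})}\to 0$. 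Hence $\scal{\lambda^{k+1},g(x^{k+1})}\to 0$, and since $\lambda^{k+1}\wto\lambda$ weakly while $g(x^{k+1})\to g(\bar x)$ strongly in $Z$, the weak-strong pairing converges, giving $\dual{\lambda,g(\bar x)}=\scal{\lambda,g(\bar x)}=0$. Together with feasibility, stationarity, and $\lambda\in K_Y^+$, this shows that $(\bar x,\lambda)$ is a KKT point with multiplier in $Z$. The crux is thus the asymptotic complementarity, where the boundedness of $(\rho_k)$ is used twice and Lemma~\ref{Lem:Infzero} does the essential work of converting the feasibility-type estimate $m^k\to 0$ into an inner-product estimate.
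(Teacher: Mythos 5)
Your proposal is correct and follows essentially the same route as the paper: feasibility from the eventual validity of the penalty test, boundedness of $\lambda^{k+1}=(\BddMul^k+\rho_k g(x^{k+1}))_+$ in $Z$, asymptotic complementarity via the identity $(\BddMul^k/\rho_k+g(x^{k+1}))_+=\BddMul^k/\rho_k-\min\{-g(x^{k+1}),\BddMul^k/\rho_k\}$ together with Lemma~\ref{Lem:Infzero}, and then a weak limit of the multipliers in the Hilbert space $Z$. The only cosmetic difference is that you carry out the final limit passage explicitly (and test complementarity against $g(x^{k+1})$ directly), whereas the paper delegates this step to Theorem~\ref{Thm:ZoweKurcyusz}\,(b) specialized to $Y=Z$.
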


\begin{proof}
Let $\mathcal{K}\subset\N$ be such that $x^{k+1}\to_{\mathcal{K}} \bar{x}$.
Without loss of generality, we assume that $\rho_k=\rho_0$ for all $k$. From
Algorithm~\ref{Alg:ALM}, it follows that $(\lambda^{k+1})_{\mathcal{K}}$
is bounded in $Z$ and
\begin{equation*}
   \min\left\{-g(x^{k+1}),\frac{\BddMul^k}{\rho_0}\right\}\to 0 \quad\text{in}~Z.
\end{equation*}
As in the proof of Theorem~\ref{Thm:OptimalityG} (a), this implies
$\|g_+(x^{k+1})\|_Z\to 0$. Furthermore, from Lemma~\ref{Lem:Infzero}, we
get $\scal{\BddMul^k,g(x^{k+1})}\to_{\mathcal{K}} 0$. Using the definition of
$\lambda^{k+1}$, we now obtain
\begin{align*}
   \scal{\lambda^{k+1},g(x^{k+1})}
   & =\rho_0 \scal{\left(\frac{\BddMul^k}{\rho_0}+g(x^{k+1})\right)_+,g(x^{k+1})} \\
   & =\rho_0 \scal{\frac{\BddMul^k}{\rho_0}-\min\left\{
   -g(x^{k+1}), \frac{\BddMul^k}{\rho_0} \right\}, g(x^{k+1})} \\
   & \to_{\mathcal{K}} 0.
\end{align*}
Since $(\lambda^{k+1})_{\mathcal{K}}$ is bounded in $Z$, this also implies
$\scal{\lambda^{k+1},g_-(x^{k+1})}\to_{\mathcal{K}} 0$. Hence, recalling
\eqref{Eq:OptimalityAKKT}, the AKKT conditions hold in $\bar{x}$.
Now, the claim essentially follows from Theorem~\ref{Thm:ZoweKurcyusz} (b),
the only difference here is that we are working in the Hilbert space
$ Z $ instead of $ Y $ or $ Y^* $, hence the two conditions \ref{Asm:Abs} and
\ref{Asm:YWC} formally required in Theorem~\ref{Thm:ZoweKurcyusz} (b) are
automatically satisfied in the current Hilbert space situation.
\end{proof}

\noindent
Some further remarks about the case of bounded multipliers are due. In
this case, the multiplier sequence $(\lambda^k)_{\mathcal{K}}$ is also bounded in $Z$,
and it does not make a difference whether we state the asymptotic
complementarity of $g(x^k)$ and $\lambda^k$ as
\begin{equation*}
   \min \{-g(x^k),\lambda^k\}\to_{\mathcal{K}}0, \quad
   \scal{\lambda^k,g(x^k)}\to_{\mathcal{K}}0,
   \quad\text{or}\quad \scal{\lambda^k,g_-(x^k)}\to_{\mathcal{K}} 0,
\end{equation*}
cf.\ the remarks after Theorem \ref{Thm:AKKT}.
However, this situation changes
if we turn to the case where $(\rho_k)$ is unbounded. Here, it is essential
that we define the asymptotic KKT conditions exactly as we did in Definition
\ref{Dfn:AKKT}.

\begin{thm}\label{Thm:OptimalityU}
Suppose that \ref{Asm:C1}, \ref{Asm:Abs} and Assumption \ref{Asm:AKKT} hold.
Let $(x^k)$ be generated by Algorithm~\ref{Alg:ALM} and let $\rho_k\to\infty$.
Then every limit point $\bar{x}$ of $(x^k)$ which is feasible satisfies AKKT
for the optimization problem \eqref{Eq:Opt}.
\end{thm}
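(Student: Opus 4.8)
The plan is to verify the two defining conditions of AKKT (Definition~\ref{Dfn:AKKT}) directly, using the sequences the algorithm already produces. I pass to a subsequence $\mathcal{K}\subset\N$ with $x^{k+1}\to_{\mathcal{K}}\bar{x}$, and take as multiplier sequence exactly $\lambda^{k+1}=(\BddMul^k+\rho_k g(x^{k+1}))_+$ from Step~3 of Algorithm~\ref{Alg:ALM}. First I would record that $\lambda^{k+1}$, being a positive part, lies in $K_Z$, and that $K_Z\subset K_Y^+$ under the Gelfand triple \eqref{Eq:Embedding}: for $y\in K_Y$ one has $e(y)\in K_Z$, whence $\dual{\lambda^{k+1},y}=\scal{\lambda^{k+1},e(y)}\ge 0$. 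Thus $(\lambda^{k+1})\subset K_Y^+$. The stationarity condition $f'(x^{k+1})+g'(x^{k+1})^*\lambda^{k+1}\to 0$ is then immediate, since it is precisely \eqref{Eq:OptimalityAKKT}, a consequence of Assumption~\ref{Asm:AKKT} together with the derivative formula for $L_{\rho_k}$.

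The only genuine work lies in the asymptotic complementarity $\dual{\lambda^{k+1},g_-(x^{k+1})}\to 0$, and I expect this to be the main obstacle. The difficulty is structural: because $\rho_k\to\infty$, the multipliers $\lambda^{k+1}$ need \emph{not} be bounded in $Z$ (this is exactly the unboundedness phenomenon emphasized after Theorem~\ref{Thm:AKKT}). Hence one cannot simply pass to a weak limit and invoke continuity; the growth of $\lambda^{k+1}$ must be controlled quantitatively against the vanishing of $g_-(x^{k+1})$ on the relevant set.

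To this end I would argue pointwise a.e.\ on $\Omega$, using that the pairing reduces to the $Z$-scalar product, $\dual{\lambda^{k+1},g_-(x^{k+1})}=\scal{\lambda^{k+1},g_-(x^{k+1})}$. Writing $a:=g(x^{k+1})(t)$ and $b:=\BddMul^k(t)\ge 0$, the integrand $(b+\rho_k a)_+\,(-a)_+$ vanishes unless $-b/\rho_k<a<0$; on that set one has $0<\lambda^{k+1}=b+\rho_k a<b$ and $0<g_-(x^{k+1})=-a<b/\rho_k$, so the product is bounded by $b^2/\rho_k$. Integrating and using $0\le\BddMul^k\le\BddMul^{\max}$ together with the monotonicity \eqref{Eq:NormMonotone} of the $L^2$-norm yields
\begin{equation*}
    0\le\scal{\lambda^{k+1},g_-(x^{k+1})}\le\frac{1}{\rho_k}\|\BddMul^k\|_Z^2\le\frac{1}{\rho_k}\|\BddMul^{\max}\|_Z^2\to 0,
\end{equation*}
which is the desired complementarity. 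Since $\bar{x}$ is feasible by hypothesis, the subsequences $(x^{k+1})_{\mathcal{K}}$ and $(\lambda^{k+1})_{\mathcal{K}}$ then witness AKKT at $\bar{x}$. I would remark in closing that feasibility of $\bar{x}$ enters only to make the AKKT conditions meaningful at $\bar{x}$; the estimate above uses nothing beyond the boundedness of $(\BddMul^k)$ and $\rho_k\to\infty$, which is precisely why Definition~\ref{Dfn:AKKT} phrases complementarity through $\dual{\lambda^k,g_-(x^k)}$ rather than through the unavailable quantity $\scal{\lambda^k,g(x^k)}$.
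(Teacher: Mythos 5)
Your proof is correct, and the key step is handled by a genuinely different---and in fact sharper---argument than the paper's. The paper also reduces everything to showing $\scal{\lambda^{k+1},g_-(x^{k+1})}\to 0$ with $\lambda^{k+1}=(\BddMul^k+\rho_k g(x^{k+1}))_+$, but it does so via the dominated convergence theorem: it passes to a further subsequence along which $g(x^{k+1})\to g(\bar{x})$ pointwise a.e.\ with an $L^2$-dominating function, and then argues pointwise by distinguishing the cases $g(\bar{x})(t)<0$ and $g(\bar{x})(t)=0$, using in the second case the bound $v^k(t)\le \BddMul^k(t)\,|g(x^{k+1})(t)|$ together with $g(x^{k+1})(t)\to 0$. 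This is exactly where the feasibility of $\bar{x}$ enters the paper's proof. Your observation that on the only set where the integrand is nonzero one has \emph{both} $0<\BddMul^k(t)+\rho_k g(x^{k+1})(t)<\BddMul^k(t)$ \emph{and} $0<g_-(x^{k+1})(t)<\BddMul^k(t)/\rho_k$ lets you multiply the two bounds instead of only one of them, yielding the quantitative estimate
\begin{equation*}
    0\le\scal{\lambda^{k+1},g_-(x^{k+1})}\le\frac{\|\BddMul^{\max}\|_Z^2}{\rho_k},
\end{equation*}
which holds along the \emph{whole} sequence, gives an explicit $O(1/\rho_k)$ rate, and dispenses with the dominated convergence theorem, the pointwise-convergent subsequence, and the feasibility of $\bar{x}$ (the latter remaining necessary only because Definition~\ref{Dfn:AKKT} is stated at feasible points). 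The only minor point worth making explicit is where \ref{Asm:Abs} enters: it guarantees $g_-(x^{k+1})\in Y$ so that the duality pairing in \eqref{Eq:AKKT} is well-defined, after which your identification of $\dual{\cdot,\cdot}$ with $\scal{\cdot,\cdot}$ via the Gelfand triple, and the inclusion $e^*(K_Z)\subset K_Y^+$, are exactly right.
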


\begin{proof}
Let $\mathcal{K}\subset\N$ be such that $x^{k+1}\to_{\mathcal{K}}\bar{x}$. Recalling \eqref{Eq:OptimalityAKKT}, it suffices to show that
\begin{equation*}
   \left(\lambda^{k+1},g_-(x^{k+1})\right)
   =\left((\BddMul^k+\rho_k g(x^{k+1}))_+,g_-(x^{k+1})\right)\to_{\mathcal{K}} 0.
\end{equation*}
To this end, let  $v^k:=(\BddMul^k+\rho_k g(x^{k+1}))_+ \cdot
g_-(x^{k+1})\in L^1(\Omega)$. Since $v^k\ge 0$, we show that $\int_\Omega v^k \to 0$
by using the dominated convergence theorem. Subsequencing if necessary,
we may assume that $g(x^{k+1})$ converges pointwise to $g(\bar{x})$ and that
$|g(x^{k+1})|$ is bounded a.e.\ by an $L^2$-function for $k\in\mathcal{K}$
\cite[Thm.\ 4.9]{Brezis2011}. It follows that $v^k$ is
bounded a.e.\ by an $L^1$-function (recall that, if $g(x^{k+1})(t)\ge 0$, then
$v^k(t)=0$). Hence, we only need to show that $v^k(t)\to 0$ pointwise.
Let $t\in \Omega$ and distinguish two cases:\\
\textbf{Case 1.} $g(\bar{x})(t)<0$. In this case, the pointwise convergence
implies that $\BddMul^k(t)+\rho_k g(x^{k+1})(t)<0$ for sufficiently large $k$ and,
hence, $v^k(t)=0$ for all such $k$.\\
\textbf{Case 2.} $g(\bar{x})(t)=0$. Then consider a fixed $ k \in
\mathcal{K} $. If $g(x^{k+1})(t)\ge 0$, it follows again from the definition
of $v^k$ that $v^k(t)=0$. On the other hand, if $g(x^{k+1})(t)<0$, it follows
that $v^k(t)\le \BddMul^k(t)\cdot|g(x^{k+1})(t)|$, and the right-hand side
converges to zero if this subcase occurs infinitely many times.

Summarizing these two cases, the pointwise convergence $v^k(t)\to 0$
follows immediately. The assertion is therefore a consequence of
the dominated convergence theorem.
\end{proof}

\noindent
For a better overview, we now briefly summarize the two previous
convergence theorems. To this end, let $(x^k)$ be the sequence generated by
Algorithm~\ref{Alg:ALM} and let $\bar{x}$ be a limit point of $(x^k)$. For the
sake of simplicity, we assume that \ref{Asm:C1}-\ref{Asm:YWC} hold. Then
Theorems \ref{Thm:OptimalityB}, \ref{Thm:OptimalityU} and \ref{Thm:ZoweKurcyusz}
imply that $\bar{x}$ is a KKT point if either
\begin{enumerate}[label=\textnormal{(\alph*)}]
   \item the sequence $(\rho_k)$ is bounded, or
   \item $\rho_k\to\infty$, $\bar{x}$ is feasible and the Zowe-Kurcyusz
      condition holds in $\bar{x}$.
\end{enumerate}
Hence, for $\rho_k\to\infty$, the success of the algorithm crucially
depends on the achievement of feasibility and the regularity of the
constraint function $g$. Recall that, by Theorem \ref{Thm:Feasibility},
the limit point $\bar{x}$ is always a stationary point of the constraint
violation $\|g_+(x)\|_Z^2$. Hence, situations in which $\bar{x}$ is infeasible
are rare; in particular, this cannot occur for convex problems (unless, of
course, the feasible set itself is empty).

\section{Applications}\label{Sec:Applic}

We now give some applications and numerical results for Algorithm \ref{Alg:ALM}. To this end, we consider some standard problems from the literature. Apart from the first example, we place special emphasis on nonlinear and nonconvex problems since the appropriate treatment of these is one of the focal points of our method.

All our examples follow the general pattern that $X$, $Y$, $Z$ are (infinite-dimensional) function spaces on some bounded domain $\Omega\subseteq\R^d$, $d\in\N$. In each of the subsections, we first give a general overview about the problem in question and then present some numerical results on the unit square $\Omega=(0,1)^2$.

In practice, Algorithm \ref{Alg:ALM} is then applied to a (finite-dimensional) discretization of the corresponding problem. Hence, we implemented the algorithm for finite-dimensional problems. The implementation was done in MATLAB\textsuperscript{\textregistered} and uses the parameters
\begin{equation*}
    \lambda^0:=0,\quad \rho_0:=1,\quad
    \BddMul^{\max}:=10^6 e,
    \quad \gamma:=10,\quad \tau:=0.1
\end{equation*}
(where $\lambda^0$, $\BddMul^{\max}$, and $ e := (1, \ldots, 1)^T $ are understood to be of appropriate dimension), together with a problem-dependent starting point $x^0$. The sequence $(\BddMul^k)$ is chosen as $\BddMul^k:=\min\{\lambda^k,\BddMul^{\max}\}$, i.e.\ it is a safeguarded analogue of the multiplier sequence. The overall stopping criterion which we use for our algorithm is given by
\begin{equation*}
    \left\| \nabla f(x) + \nabla g(x) \lambda \right\|_{\infty} \le 10^{-4}
    \quad\text{and}\quad
    \left\|\min\{-g(x),\lambda\}\right\|_{\infty} \le 10^{-4},
\end{equation*}
i.e.\ it is an inexact KKT condition. Furthermore, in each outer iteration, we solve the corresponding subproblem in Step 2 by computing a point $x^{k+1}$ which satisfies
\begin{equation*}
    \left\| L_{\rho_k}'(x^{k+1},\BddMul^k) \right\|_{\infty}\le 10^{-6}.
\end{equation*}
Recall that our algorithmic framework contains the quadratic penalty or Moreau-Yosida regularization technique \cite{Hintermueller2006,Ulbrich2011} as a special case (for $\BddMul^k:=0$). Since this method is rather popular (in particular for state-constrained optimal control problems), we have performed a numerical comparison to our augmented Lagrangian scheme. To make the comparison fair, we incorporated two modifications into the methods. For the Moreau-Yosida scheme, it does not make sense to update the penalty parameter conditionally, and it is therefore increased in every iteration. On the other hand, for the augmented Lagrangian method, recall that the penalty updating scheme \eqref{Eq:RhoTest} is only defined for $k\ge 1$. To enable a proper treatment of the penalty parameter in the first iteration, we use the updating scheme \eqref{Eq:RhoTest} with the right side replaced by $\tau\|\min\{-g(x^k),0\}\|_Z$ for $k=0$.

\subsection{The Obstacle Problem}\label{Sec:ObstProb}

We consider the well-known obstacle problem \cite{Ito1990,Rodrigues1987}. To this end, let $\Omega\subseteq\R^d$ be a bounded domain, and let $X:=Y:=H_0^1(\Omega)$, $Z:=L^2(\Omega)$. The obstacle problem considers the minimization problem
\begin{equation}\label{Eq:ObstProb}
    \min\ f(u) \quad\text{s.t.}\quad u\ge\psi,
\end{equation}
where $f(u):=\|\nabla u\|_{L^2(\Omega)}^2$ and $\psi\in X$ is a fixed obstacle. In order to formally describe this problem within our framework \eqref{Eq:Opt}, we make the obvious definition
\begin{equation*}
    g:X\to Y, \quad g(u):=\psi-u.
\end{equation*}
Using the Poincar\'e inequality, it is easy to see that $f$ is strongly convex on $X$ \cite[Thm.\ 6.30]{Adams2003}. Hence, the obstacle problem satisfies the requirements of Theorem \ref{Thm:Convex}, which implies that the augmented Lagrangian method is globally convergent. Furthermore, since $X=Y$, it follows that $g'(u)=-\operatorname{id}_X$ for every $u\in X$. Hence, the Zowe-Kurcyusz condition (cf.\ Definition \ref{Dfn:ZoweKurcyusz}) is trivially satisfied in every feasible point, which implies the boundedness of the dual iterates $(\lambda^k)$ by Theorem \ref{Thm:ZoweKurcyusz}.

In fact, the constraint function $g$ satisfies much more than the Zowe-Kurcyusz condition. For every $u\in X$, the mapping $g'(u)=-\operatorname{id}_X$ is bijective. Hence, if a subsequence $(x^k)_{\mathcal{K}}$ converges to a KKT point $\bar{x}$ of \eqref{Eq:ObstProb} and $\bar{\lambda}$ is the corresponding multiplier, then we obtain
\begin{equation*}
    f'(x^k)-\lambda^k=f'(x^k)+g'(x^k)^* \lambda^k \to_{\mathcal{K}} 0.
\end{equation*}
In other words, we see that $\lambda^k \to_{\mathcal{K}} f'(\bar{x})=\bar{\lambda}$, i.e.\ $(\lambda^k)_{\mathcal{K}}$ converges to the (unique) Lagrange multiplier corresponding to $\bar{x}$.

We now present some numerical results for $\Omega:=(0,1)^2$ and the obstacle
\begin{equation*}
    \psi(x,y):=\max\left\{0.1-0.5 \left\|
    \begin{pmatrix}
        x-0.5 \\ y-0.5
    \end{pmatrix}
    \right\|,0\right\},
\end{equation*}
cf.\ Figure \ref{Fig:Obst}. For the solution process, we choose $n\in\N$ and discretize $\Omega$ by means of a standard grid which consists of $n$ (interior) points per row or column, i.e.\ $n^2$ interior points in total. Furthermore, we use
\begin{equation*}
    f(u)=\|\nabla u\|_{L^2(\Omega)}^2 = -\dual{ \Delta u , u }_{X^*\times X}
    \quad\text{for all }u\in X
\end{equation*}
and approximate the Laplace operator by a standard five-point finite difference scheme. The subproblems occurring in Algorithm \ref{Alg:ALM} are unconstrained minimization problems which we solve by means of a standard semismooth Newton method.

\begin{table}[ht]\centering
\begin{tabular}{|r|ccc|ccc|}\hline
    & \multicolumn{3}{c|}{Augmented Lagrangian}
    & \multicolumn{3}{c|}{Moreau-Yosida} \\
    $n$ & outer & inner & final $\rho_k$ & outer & inner
    & final $\rho_k$ \\ \hline
    16 & 6 & 9 & $10^4$ & 7 & 11 & $10^7$ \\
    32 & 7 & 13 & $10^5$ & 7 & 15 & $10^7$ \\
    64 & 7 & 17 & $10^5$ & 7 & 18 & $10^7$ \\
    128 & 7 & 22 & $10^6$ & 8 & 22 & $10^8$ \\
    256 & 8 & 25 & $10^7$ & 8 & 27 & $10^8$ \\ \hline
\end{tabular}
\caption{Numerical results for the obstacle problem.}
\label{Tab:ObstProb}
\end{table}

\noindent
Table \ref{Tab:ObstProb} contains the inner and outer iteration numbers together with the final penalty parameter for different values of the discretization parameter $n$. Both the augmented Lagrangian and Moreau-Yosida regularization methods scale rather well with increasing dimension; in particular, the outer iteration numbers remain nearly constant. Performance-wise, the two methods perform very similarly, with the augmented Lagrangian method holding a slight advantage in terms of iteration numbers and penalty parameters.

\begin{figure}\centering
    \subfloat[Constraint function $\psi$]{\includegraphics[scale=0.5]{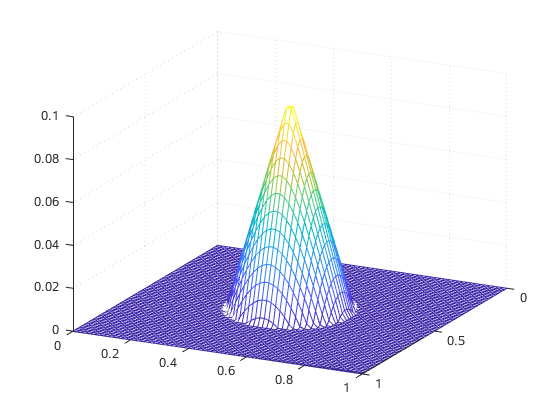}}
    \subfloat[Solution $\bar{u}$]{\includegraphics[scale=0.5]{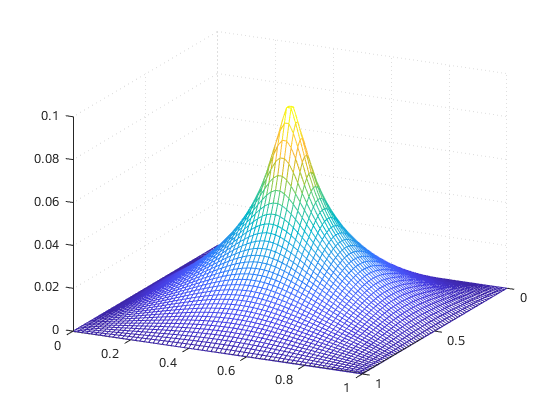}}
    \caption{Numerical results for the obstacle problem with $n=64$.}
    \label{Fig:Obst}
\end{figure}

\subsection{The Obstacle Bratu Problem} \label{Sec:BratuProb}

Let us briefly consider the obstacle Bratu problem \cite{Facchinei2003,Hoppe1989}, which we simply refer to as Bratu problem. This is a non-quadratic and nonconvex problem which differs from \eqref{Eq:ObstProb} in the choice of objective function. To this end, let
\begin{equation*}
    f(u):=\|\nabla u\|_{L^2(\Omega)}^2 - \alpha \int_{\Omega} e^{-u(x)} dx
\end{equation*}
for some fixed $\alpha>0$. To ensure well-definedness of $f$, we require $\Omega \subseteq \R^2$. As before, we set $X:=Y:=H_0^1(\Omega)$, $Z:=L^2(\Omega)$ and consider the minimization problem
\begin{equation}\label{Eq:BratuProb}
    \min\ f(u) \quad\text{s.t.}\quad u\ge\psi
\end{equation}
for some fixed obstacle $\psi\in X$; that is, $g(u):=\psi-u$. Before we proceed, let us first show that the function $f$ is well-defined and satisfies the assumptions \ref{Asm:Lsc} and \ref{Asm:C1}.

\begin{lem}\label{Lem:BratuProb}
    The function $f$ is well-defined, weakly lsc and continuously Fr\'echet differentiable from $H^1_0(\Omega)$ into $\R$.
\end{lem}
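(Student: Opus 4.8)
The plan is to split $f = f_1 + f_2$ with $f_1(u):=\|\nabla u\|_{L^2(\Omega)}^2$ and $f_2(u):=-\alpha\int_\Omega e^{-u}\,dx$, and to treat the two summands separately. The quadratic part $f_1$ is routine: it is finite on all of $H_0^1(\Omega)$, Fréchet differentiable with $f_1'(u)[v]=2\int_\Omega \nabla u\cdot\nabla v\,dx$ (a continuous dependence on $u$), and convex and continuous, hence weakly lsc. All the difficulty therefore lies in $f_2$, where the exponential nonlinearity must be controlled despite the failure of the embedding $H_0^1(\Omega)\hookrightarrow L^\infty(\Omega)$ in two dimensions.

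The central tool throughout is the Moser--Trudinger inequality, available precisely because $\Omega\subseteq\R^2$: for $u\in H_0^1(\Omega)$ one has $\int_\Omega e^{\beta u^2}\,dx<\infty$, and, crucially, this bound is uniform over bounded subsets of $H_0^1(\Omega)$ (for $\beta$ below Moser's critical constant). Since $e^{-u}\le e^{|u|}\le e^{1/2}e^{u^2/2}$ pointwise, the finiteness of $\int_\Omega e^{-u}\,dx$, and thus the well-definedness of $f_2$, follows immediately; the same estimate shows that the families $\{e^{-u_k}\}$ and $\{e^{-2u_k}\}$ are uniformly integrable whenever $(u_k)$ is bounded in $H_0^1(\Omega)$.

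For weak lower semicontinuity I would show that $u\mapsto\int_\Omega e^{-u}\,dx$ is weakly (sequentially) continuous, so that $f_2$ is weakly continuous and $f=f_1+f_2$ is weakly lsc. If $u_k\wto u$ in $H_0^1(\Omega)$, the compact Rellich embedding $H_0^1(\Omega)\hookrightarrow L^p(\Omega)$ gives, along a subsequence, $u_k\to u$ a.e., hence $e^{-u_k}\to e^{-u}$ a.e.; combined with the uniform integrability above, Vitali's convergence theorem yields $\int_\Omega e^{-u_k}\to\int_\Omega e^{-u}$, and the usual subsequence argument upgrades this to the full sequence.

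For continuous differentiability, the candidate derivative is $f_2'(u)[v]=\alpha\int_\Omega e^{-u}v\,dx$, which is a bounded functional on $H_0^1(\Omega)$ because $\|e^{-u}\|_{L^2}<\infty$ and $\|v\|_{L^2}\le C\|v\|_{H_0^1}$. To verify the Fréchet property I would expand, using the pointwise Taylor bound $|e^{-v}-1+v|\le\tfrac12 v^2 e^{|v|}$, and estimate the remainder $\int_\Omega e^{-u}(e^{-v}-1+v)\,dx$ by $\tfrac12\int_\Omega e^{-u}e^{|v|}v^2\,dx$; a three-factor Hölder split, together with Moser--Trudinger (bounding $\|e^{-u}\|_{L^{p_1}}$ and, for $\|v\|_{H_0^1}$ small, $\|e^{|v|}\|_{L^{p_2}}$ uniformly) and a Sobolev bound on $\|v\|_{L^{2p_3}}^2$, shows the remainder is $O(\|v\|_{H_0^1}^2)=o(\|v\|_{H_0^1})$. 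Continuity of $u\mapsto f_2'(u)$ then reduces to continuity of the Nemytskii map $u\mapsto e^{-u}$ from $H_0^1(\Omega)$ into $L^2(\Omega)$, which I would again establish via a.e.\ convergence (from strong $H_0^1$ convergence) plus Vitali. The main obstacle is exactly the exponential growth in the absence of an $L^\infty$ bound; everything hinges on having Moser--Trudinger in its uniform form and on arranging the Hölder exponents so that the exponential factors remain in the subcritical regime.
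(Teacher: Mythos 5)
Your proposal follows essentially the same route as the paper's proof: isolate the exponential term, use the fact that $u\mapsto e^{u}$ maps bounded sets of $H_0^1(\Omega)$ into bounded sets of $L^p(\Omega)$ for every $p<\infty$ (the paper cites this as a known result, you derive it from Moser--Trudinger), pass to the limit via compact embedding and a.e.\ convergence for weak lower semicontinuity (you use Vitali where the paper uses weak $L^p$ convergence plus a lemma of Brezis to identify the limit), and control the second-order Taylor remainder of the exponential by H\"older against $\|h\|_{L^4}^2\le c\|h\|_{H^1}^2$. You additionally verify continuity of the derivative via the Nemytskii map $u\mapsto e^{-u}$ into $L^2(\Omega)$, a step the paper leaves implicit.

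One point to repair: the uniform Moser--Trudinger inequality bounds $\sup\int_\Omega e^{\beta u^2}\,dx$ over the \emph{unit} ball of $H_0^1(\Omega)$ for $\beta\le 4\pi$; over a ball of radius $M$ it only holds for $\beta M^2\le 4\pi$, so your fixed estimate $e^{|u|}\le e^{1/2}e^{u^2/2}$ does not give uniform integrability on arbitrary bounded sets. The correct (and standard) fix is to use Young's inequality with a set-dependent parameter, $|u|\le \varepsilon u^2 + \tfrac{1}{4\varepsilon}$ with $\varepsilon$ chosen so that $\varepsilon M^2\le 4\pi$; with this adjustment all of your uniform-integrability and H\"older steps go through as claimed.
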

\begin{proof}
In the proof we will follow some arguments of \cite{ItoKunisch2002}.
It is only necessary to study the mapping $u\mapsto  \int_{\Omega} e^{u(x)} dx$.
The mapping $u\mapsto e^{u}$ maps bounded sets in $H^1_0(\Omega)$ to bounded sets in $L^p(\Omega)$ for all $p<\infty$, see \cite{PlumWieners2002}.
Let $(u_n)$ be a sequence converging weakly to $u$ in $H^1_0(\Omega)$. By compact embeddings, this sequence
converges strongly to $u$ in $L^p(\Omega)$ for all $p<\infty$. After extracting a subsequence if necessary, we have $u_n\to u$ pointwise
almost everywhere and $e^{u_n}\rightharpoonup \tilde e$ in $L^p(\Omega)$ for all $p<\infty$.
By a result of Brezis \cite[Lemma 3, page 126]{Brezis1971}, it follows $\tilde e = e^{u}$. This proves the weak lower semicontinuity of $f$.

Let $u,h \in H^1_0(\Omega)$ be given. Let us write
\[
 e^{u+h}-e^u-e^uh = \int_0^1 \int_0^1 e^{u+sth}sh^2 \ dt \ ds,
\]
which implies
\[
 \| e^{u+h}-e^u-e^uh\|_{L^1(\Omega)} \le \frac12\|e^{u+|h|}\|_{L^2(\Omega)} \|h\|_{L^4(\Omega)}^2
 \le c\|e^{u+|h|}\|_{L^2(\Omega)} \|h\|_{H^1(\Omega)}^2.
\]
Using the boundedness property of the mapping $u\mapsto e^{u}$ mentioned above, it follows
$ \| e^{u+h}-e^u-e^uh\|_{L^1(\Omega)} =o(\|h\|_{H^1(\Omega)})$, which implies the Fr\'echet differentiability of $f$.
\end{proof}

\noindent
Due to the constraint $u\ge \psi$, the functional $f$ is bounded from below on the feasible set of \eqref{Eq:BratuProb}.
Together with the lower-semicontinuity result this implies the existence of solutions of the minimization problem \eqref{Eq:BratuProb}.
Note that this statement is no longer valid if $\Omega\subseteq\R^d$ with $d\ge3$.

From a theoretical point of view, the Bratu problem is much more difficult 
than the obstacle problem from Section \ref{Sec:ObstProb}. While the 
constraint function is equally well-behaved, the objective function in 
\eqref{Eq:BratuProb} is neither quadratic nor convex. Hence, we cannot 
apply Theorem~\ref{Thm:Convex} or the theory from Section~\ref{Sec:Global},
wheras the KKT-like convergence results from Sections~\ref{Sec:KKT}
and \ref{Sec:Convergence} still hold.

To analyse how our method behaves in practice, we again considered 
$\Omega:=(0,1)^2$ and implemented the Bratu problem using the same obstacle 
and a similar implementation as we did for the standard obstacle problem. 
The resulting images are given in Figure \ref{Fig:Bratu}, and some iteration
numbers are given in Table \ref{Tab:Bratu}.
\begin{table}[ht]\centering
\begin{tabular}{|r|ccc|ccc|}\hline
    & \multicolumn{3}{c|}{Augmented Lagrangian}
    & \multicolumn{3}{c|}{Moreau-Yosida} \\
    $n$ & outer & inner & final $\rho_k$ & outer & inner
    & final $\rho_k$ \\ \hline
    16 & 6 & 13 & $10^4$ & 7 & 15 & $10^7$ \\
    32 & 7 & 17 & $10^5$ & 7 & 17 & $10^7$ \\
    64 & 7 & 19 & $10^5$ & 7 & 19 & $10^7$ \\
    128 & 8 & 24 & $10^6$ & 8 & 23 & $10^8$ \\
    256 & 8 & 24 & $10^6$ & 8 & 28 & $10^8$ \\ \hline
\end{tabular}
\caption{Numerical results for the Bratu problem.}
\label{Tab:Bratu}
\end{table}
As with the obstacle problem, we observe that both the augmented Lagrangian and Moreau-Yosida regularization methods scale well with increasing dimension, and the augmented Lagrangian method once again holds a certain advantage in terms of iteration numbers and penalty parameters. In fact, the gap between the two methods is slightly bigger than for the standard obstacle problem.

\begin{figure}\centering
    \subfloat[Constraint function $\psi$]{\includegraphics[scale=0.5]{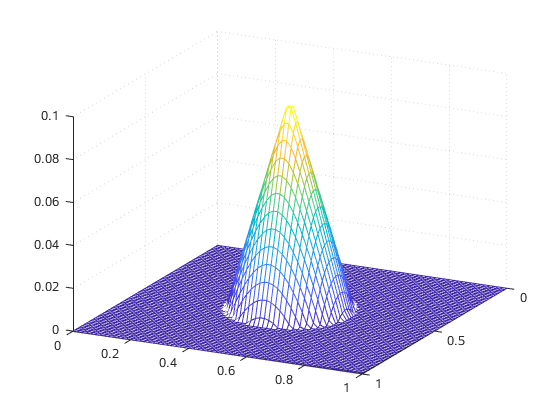}}
    \subfloat[Solution $\bar{u}$]{\includegraphics[scale=0.5]{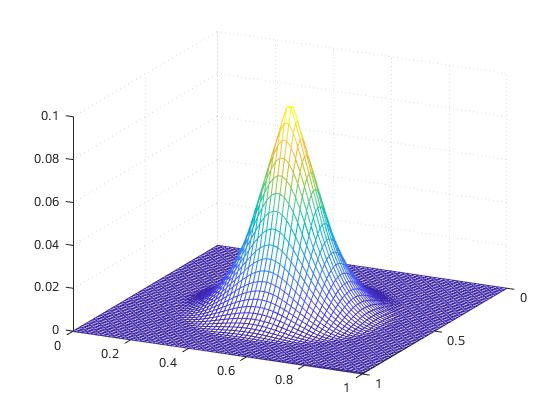}}
    \caption{Numerical results for the Bratu problem with $\alpha=1$ and $n=64$.}
    \label{Fig:Bratu}
\end{figure}

\subsection{Optimal Control Problems}

We now turn to a class of optimal control problems subject to a semilinear elliptic equation. Let $\Omega\subseteq \R^d$, $d=2,3$, be a bounded Lipschitz domain.
The control problem we consider consists of minimizing the functional
\begin{equation}\label{Eq:OptCont}
    J(y,u):=\frac12\|y-y_d\|_{L^2(\Omega)}^2 + \frac \alpha2\|u\|_{L^2(\Omega)}^2
\end{equation}
subject to $y\in H^1_0(\Omega)\cap C(\bar\Omega)$ and $u\in L^2(\Omega)$ satisfying the
semilinear equation
\begin{equation*}
    -\Delta y + d(y) = u \quad \text{in } H^1_0(\Omega)^*
\end{equation*}
and the pointwise state constraints
\begin{equation}\label{Eq:OptContC}
    y\ge y_c \quad\text{in } \Omega.
\end{equation}
Here, $\alpha$ is a positive parameter, $y_d\in L^2(\Omega)$, and $y_c\in C(\bar\Omega)$ with $y_c\le 0$ on $\partial\Omega$ are given functions. The nonlinearity $d$ in the elliptic equation is induced by a function $d:\R\to \R$, which is assumed to be continuously differentiable and monotonically increasing.

Before we can apply the augmented Lagrangian method to \eqref{Eq:OptCont}, we need to formally eliminate the state equation coupling the variables $y$ and $u$. Due to elliptic regularity results, this equation admits for each control $u\in L^2(\Omega)$ a uniquely determined weak solution $y\in H^1_0(\Omega)\cap C(\bar\Omega)$. Moreover, the mapping $u\mapsto y$ is Fr\'echet differentiable in this setting \cite[Thm.\ 4.17]{Troeltzsch2010}. Let us denote this mapping by $S$. Using $S$, we can eliminate the state equation to obtain an optimization problem with inequality constraints:
\begin{equation}\label{Eq:OptCont2}
    \min\ J(S(u),u) \quad \text{s.t.}\quad S(u)\ge y_c.
\end{equation}
We can now apply Algorithm \ref{Alg:ALM} to this problem. The inequality $S(u)\ge y_c$ has to be understood in the sense of $C(\bar\Omega)$, which necessitates the choice $Y:=C(\bar\Omega)$. Furthermore, we have $X:=Z:=L^2(\Omega)$. Assuming a linearized Slater condition, one can prove that the Zowe-Kurcyusz condition is fulfilled, and there exists a Lagrange multiplier $\lambda \in C(\bar\Omega)^*$ to the inequality constraint $S(u)\ge y_c$, see, e.g., \cite[Thm.\ 6.8]{Troeltzsch2010}.

The subproblems generated by Algorithm \ref{Alg:ALM} are unconstrained optimization problems. By reintroducing the state variable $y$, we can write these subproblems as
\begin{equation}\label{Eq:OptContS}
    \min\ J(y,u)+\frac{\rho_k}{2}\left\| \left( y_c-y+\frac{\BddMul^k}{\rho_k} \right)_+ \right\|^2
    \quad\text{s.t.}\quad y=S(u).
\end{equation}
Hence, we have transformed \eqref{Eq:OptCont} into a sequence of optimal control problems which include the state equation but not the pointwise constraint \eqref{Eq:OptContC}.

Let us proceed with some numerical results. As a test problem, we chose an example similar to the one presented in \cite{Neitzel2015}, where $\Omega:=(0,1)^2$, $d(y):=y^3$, $\alpha:=10^{-3}$, and
\begin{equation*}
    y_c(x):=-\frac{2}{3}+\frac{1}{2}
    \min\{ x_1+x_2,1+x_1-x_2,
    1-x_1+x_2,2-x_1-x_2\}.
\end{equation*}
Clearly, in this setting, \eqref{Eq:OptCont} and its reformulation \eqref{Eq:OptCont2} are nonconvex problems. We solve the subproblems \eqref{Eq:OptContS} with the MATLAB\textsuperscript{\textregistered} function \texttt{fmincon}, where the Hessian of the objective is replaced by a semismooth version thereof.
\begin{table}\centering
\begin{tabular}{|r|ccc|ccc|}\hline
    & \multicolumn{3}{c|}{Augmented Lagrangian}
    & \multicolumn{3}{c|}{Moreau-Yosida} \\
    $n$ & outer & inner & final $\rho_k$ & outer & inner
    & final $\rho_k$ \\ \hline
    16 & 6 & 16 & $10^4$ & 6 & 19 & $10^6$ \\
    32 & 7 & 21 & $10^5$ & 7 & 22 & $10^7$ \\
    64 & 7 & 23 & $10^6$ & 7 & 25 & $10^7$ \\
    128 & 7 & 26 & $10^6$ & 8 & 30 & $10^8$ \\
    256 & 8 & 31 & $10^7$ & 9 & 37 & $10^9$ \\ \hline
\end{tabular}
\caption{Numerical results for the optimal control problem.}
\label{Tab:OptCont}
\end{table}
Table \ref{Tab:OptCont} contains the resulting iteration numbers and final penalty parameters for both the augmented Lagrangian and Moreau-Yosida regularization methods. As with the previous examples, both methods scale well with increasing dimension, and the augmented Lagrangian method is more efficient in terms of iteration numbers and penalty parameters.

\begin{figure}[ht]\centering
    \subfloat[Constraint function $y_c$]{\includegraphics[scale=0.5]{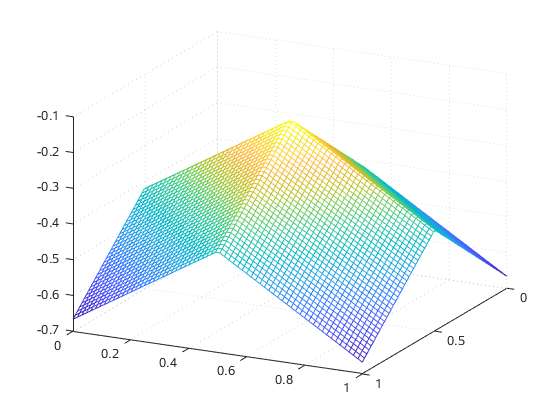}}
    \subfloat[Optimal state $\bar{y}$]{\includegraphics[scale=0.5]{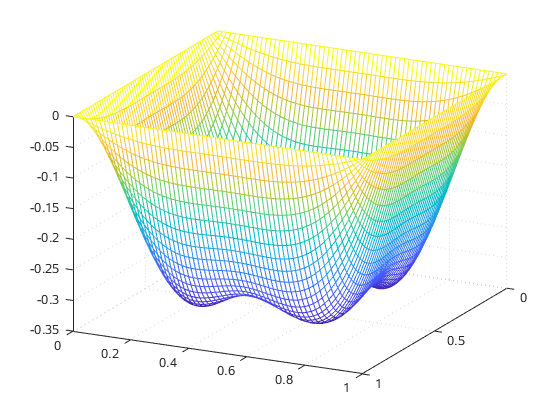}}\\
    \subfloat[Optimal control $\bar{u}$]{\includegraphics[scale=0.5]{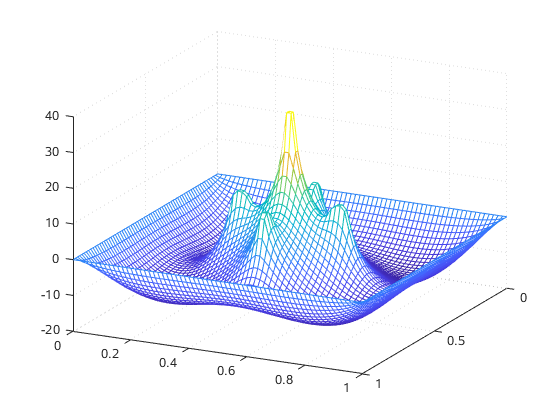}}
    \subfloat[Lagrange multiplier $\bar{\lambda}$]{\includegraphics[scale=0.5]{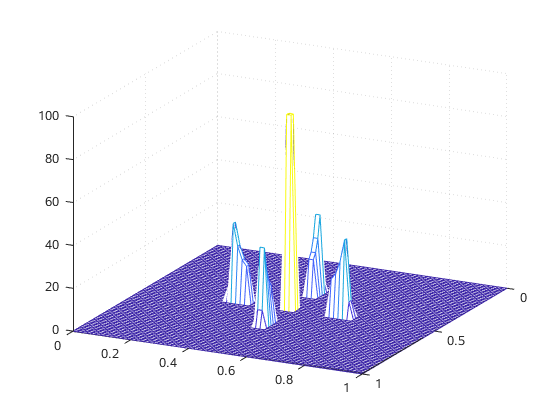}}
    \caption{Numerical results for the optimal control problem with $n=64$.}
    \label{Fig:OptCont}
\end{figure}

\noindent
The state constraint $y_c$ and the results of our method are given in Figure \ref{Fig:OptCont}. It is interesting to note that the multiplier $\bar{\lambda}$ appears to be much less regular than the optimal control $\bar{u}$ and state $\bar{y}$. This is not surprising because, due to our construction, we have
\begin{equation*}
    \bar{u}\in L^2(\Omega),\quad \bar{y}\in C(\bar{\Omega}),
    \quad\text{and}\quad \bar{\lambda}\in C(\bar{\Omega})^*.
\end{equation*}
The latter is well-known to be the space of Radon measures on $\bar{\Omega}$, which is a superset of $L^2(\Omega)$. In fact, the convergence data shows that the (discrete) $L^2$-norm of $\bar{\lambda}$ grows approximately linearly as $n$ increases, possibly even diverging to $+\infty$, which suggests that the underlying (infinite-dimensional) problem \eqref{Eq:OptCont} does not admit a multiplier in $L^2(\Omega)$ but only in $C(\bar{\Omega})^*$.

\section{Final Remarks}

We have presented an augmented Lagrangian method for the solution of optimization problems in Banach spaces, which is essentially a generalization of the modified augmented Lagrangian method from \cite{Birgin2014}. Furthermore, we have shown how the method can be applied to well-known problem classes, and the corresponding numerical results appear quite promising. In particular, the method appears to be (slightly) more efficient than the well-known Moreau-Yosida regularization scheme, especially with regard to the behavior of the penalty parameter.

From a theoretical point of view, the main strength of our method is the ability to deal with very general classes of inequality constraints; in particular, inequality constraints with infinite-dimensional image space. Other notable features include desirable convergence properties for nonsmooth problems, the ability to find KKT points of arbitrary nonlinear (and nonconvex) problems, and a global convergence result which covers many prominent classes of convex problems. We believe the sum of these aspects to be a substantial contribution to the theory of augmented Lagrangian methods.

Another key concern in our work is the compatibility of the algorithm with suitable constraint qualifications. To deal with this matter properly, we investigated the well-known Zowe-Kurcyusz regularity condition \cite{Zowe1979}, see also Robinson \cite{Robinson1976}, and showed that this condition can be used to guarantee the boundedness of suitable multiplier sequences corresponding to asymptotic KKT conditions. While the main application of this result is clearly the boundedness of the multiplier sequence generated by the augmented Lagrangian method, we state explicitly that the underlying theory is independent of our specific algorithm. With the understanding that most iterative methods for constrained optimization usually satisfy the KKT conditions in an asymptotic sense, we hope that this aspect of our theory will facilitate similar research into other methods or find applications in other topics.

\section*{Acknowledgement}

The authors would like to thank two anonymous referees for their helpful suggestions in improving the paper.

\bibliographystyle{abbrv}
\bibliography{OptInfALM2}

\end{document}